\newtheorem{Lem}{Lemma}
\newtheorem{Theo}[Lem]{Theorem}
\newtheorem{Prop}[Lem]{Proposition}
\newtheorem{Cor}[Lem]{Corollary}
\newtheorem{Prob}{Problem}
\newtheorem{Conj}[Prob]{Conjecture}
\def\N{{\mathbb N}}
\begin{document}
\title[Riemann Zeta Function and the fractional part ]{On the Riemann Zeta Function and the fractional part of rational powers}
\author[T. Barnea]{Tal Barnea}
\address{}
\email{tal.q.barnea@gmail.com}

\thanks{2010 Mathematics subject classification. 11A55, 11M99, 11Y65, 11K31}
\thanks{Keywords: continued fraction, fractional part, zeta function, rational powers, prime zeta function}
\maketitle
\begin{abstract}
Using elementary methods we find surprising connections between the values of the Riemann Zeta Function over integers and the fractional parts of rational powers, and a connection between the Riemann Zeta Function and the Prime Zeta Function.
\end{abstract}

\section{Introduction}
Given a real number $x$ we recall that the \textbf{simple continued fraction} of it is
$$x=a_0+\cfrac{1}{a_1+\cfrac{1}{a_2+\cfrac{1}{a_3+\cdots}}}$$  where all $a_i$ are integers and $a_i  \ge  1$ for $i \ge 1$. Thus, the first term $a_0=\lfloor x  \rfloor$ and the second term, is $a_1=\left\lfloor \frac{1}{x-a_0 }  \right\rfloor$. 
%It is easy to see that $x$ is rational if and only the sequence $(a_i)$  is finite. The sequence is periodic if and only if $x$ is a quadratic irrational.  

The Riemann Zeta function is defined for real $s>1$ as $$\zeta(s)=\sum_{n=1}^{\infty} \frac{1}{n^s}.$$
For all positive integers $n \ge 2$ we have that $1 < \zeta(n)<2$.
Therefore, the first term of $\zeta(n)$ in its simple continued fraction is always $1$. 
%Euler proved that $\zeta(2n) = \frac{a}{b}\pi^{2n}$ where $a$, $b$, and $n$ are positive integers. and thus, is always irrational. R.~Ap\'ery ~\cite{Ap} showed that $\zeta(3)$ is irrational, and T.~Rivoal and K.Ball~\cite{Ri} proved that infinitely many numbers of the form $\zeta(2n+1)$ are irrational. W.Zudilin [SOMETHING] improved on this, proving that at least $2^{\log(N)/\log(\log(N))}$ odd $\zeta(n)$ are irrational below $N$. Hence, these all have infinite continued fractions.
In the OEIS sequence A013697, Second term in continued fraction for zeta(n), it is stated by F.~Adams-Watters: "It appears that $a(n) = 2^n - \left\lfloor \left( \frac{4}{3} \right)^n \right\rfloor - k$, where $k$ is usually $2$, but is sometimes $1$. Up to $n=1000$, the only values of $n$ where $k = 1$ are $4$, $5$, $13$, $14$, and $17$. That is, $$\left\lfloor \frac{1}{\zeta(n)-1} \right\rfloor=2^n-\left\lfloor \left( \frac{4}{3} \right)^n \right\rfloor-k,$$ where $k=1$ or $k=2$".  
We first prove that this formula holds for all $n \geq 2$.

\begin{Theo}  \label{Theo:Main}
For all natural numbers $n \ge 2$, we have that $$\left\lfloor \frac{1}{\zeta(n)-1} \right\rfloor=2^n-\left\lfloor \left( \frac{4}{3} \right)^n \right\rfloor-k,$$    where $k=2$ except for finite exceptions where $k=1$.
\end{Theo}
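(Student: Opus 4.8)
The plan is to expand $\frac{1}{\zeta(n)-1}$ as a geometric‑type series and read off the floor. Write $\zeta(n)-1=\sum_{m\ge 2}m^{-n}=2^{-n}(1+u_n)$ with $u_n:=\sum_{m\ge 3}(2/m)^n$. Since each summand $(2/m)^n$ decreases in $n$, the quantity $u_n$ is decreasing, and a direct check gives $u_3<1$; hence $u_n<1$ for all $n\ge 3$ and
\[
\frac{1}{\zeta(n)-1}=\frac{2^n}{1+u_n}=2^n\sum_{j\ge 0}(-u_n)^j=2^n-2^nu_n+\frac{2^nu_n^2}{1+u_n}.
\]
The crucial identity is $2^nu_n=\sum_{m\ge 3}(4/m)^n=(4/3)^n+1+w_n$, where the $m=4$ summand $(4/4)^n=1$ is split off and $w_n:=\sum_{m\ge 5}(4/m)^n$. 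Substituting,
\[
\frac{1}{\zeta(n)-1}=2^n-(4/3)^n-1+\delta_n,\qquad \delta_n:=\frac{2^nu_n^2}{1+u_n}-w_n.
\]

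Next I would pin down $\delta_n$. Crude majorizations (geometric series, or comparison with an integral) give $u_n=(2/3)^n\bigl(1+O((3/4)^n)\bigr)$ and $w_n=(4/5)^n\bigl(1+O((5/6)^n)\bigr)$, whence $2^nu_n^2=(8/9)^n\bigl(1+o(1)\bigr)$ and therefore $\delta_n=(8/9)^n+O((4/5)^n)$; in particular $\delta_n\to 0$, and combining this asymptotic with a finite numerical check of small indices shows $0<\delta_n<1$ for every $n\ge 3$. Since $3\nmid 4^n$, the number $(4/3)^n$ is never an integer; set $L_n:=\lfloor(4/3)^n\rfloor$ and $f_n:=\{(4/3)^n\}\in(0,1)$. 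Then $\frac{1}{\zeta(n)-1}=(2^n-L_n-1)+(\delta_n-f_n)$ with $\delta_n-f_n\in(-1,1)$, so
\[
\Big\lfloor\tfrac{1}{\zeta(n)-1}\Big\rfloor=2^n-L_n-1-[\,f_n>\delta_n\,].
\]
Adding the elementary direct verification for $n=2$ (where the series diverges), this proves the formula $\big\lfloor\tfrac{1}{\zeta(n)-1}\big\rfloor=2^n-\lfloor(4/3)^n\rfloor-k$ for all $n\ge 2$, with $k=2$ when $f_n>\delta_n$ and $k=1$ when $f_n\le\delta_n$.

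What remains — and this is the main obstacle — is to show that $k=1$, i.e.\ $f_n\le\delta_n$, occurs only finitely often. Because $\delta_n=(8/9)^n+O((4/5)^n)$ we have $\delta_n<(9/10)^n$ for all large $n$, so it suffices to prove the one‑sided lower bound $\{(4/3)^n\}>(9/10)^n$ for all sufficiently large $n$; equivalently, that the integer $\rho_n:=4^n\bmod 3^n$ satisfies $\rho_n>(27/10)^n$ eventually, i.e.\ that $(4/3)^n$ is, for $n$ large, not within roughly $(8/9)^n$ of an integer from above. Heuristically this is immediate — if $(4/3)^n\bmod 1$ equidistributes then $\sum_n(8/9)^n<\infty$ and Borel–Cantelli gives finiteness — but equidistribution is open, so one needs an unconditional bound. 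I would try to extract it from the recursion $\rho_{n+1}\equiv 4\rho_n+(L_n\bmod 3)\,3^n \pmod{3^{n+1}}$, which forces $\rho_{n+1}$ to be small only if ($3\mid L_n$ and $\rho_n$ is small) or $\rho_n/3^n$ lies very close to one of $\tfrac14,\tfrac12,\tfrac34$; iterating this dichotomy reduces matters to separating $\tfrac13=\rho_1/3$ from dyadic rationals, though making the resulting estimate quantitatively strong enough to beat $(8/9)^n$ is delicate. Alternatively one would invoke a Mahler‑type theorem on the fractional parts of powers of a fixed rational number, the live question being whether the known constant lies comfortably above $8/9$. The five small exceptions $n\in\{4,5,13,14,17\}$ emerge from the same finite computation that confirms $0<\delta_n<1$.
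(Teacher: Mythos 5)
Your decomposition is, up to notation, the paper's own: the geometric expansion of $\frac{1}{\zeta(n)-1}$ in powers of $u_n$ is exactly how Proposition~\ref{Prop:Upper} gets the upper bound $\delta_n<\varepsilon(n)=O\left(\left(\frac{8}{9}\right)^n\right)$, and the positivity of your $\delta_n$ is the content of Proposition~\ref{Prop:Lower} (which the paper makes explicit by an integral comparison for $n\ge 7$ plus a check of $n=2,\dots,6$ — your ``asymptotics plus finite numerical check'' needs to be turned into such an explicit bound, but that is routine). So the dichotomy $k\in\{1,2\}$ is fine. The genuine gap is the step you yourself flag as open: showing $\{(4/3)^n\}\le\delta_n$ happens only finitely often. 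You hedge between an elementary recursion for $4^n\bmod 3^n$ and ``a Mahler-type theorem'' whose constant you are unsure of, and you prove neither.

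The resolution is that the Mahler-type theorem exists with no constraint on the constant: Mahler \cite{Ma} proved that for coprime $p>q\ge 2$ and \emph{any} $\varepsilon<0$, one has $\left\{\left(\frac{p}{q}\right)^n\right\}>e^{\varepsilon n}$ for all but finitely many $n$. Taking $p=4$, $q=3$ and $e^{\varepsilon}=\frac{9}{10}>\frac{8}{9}$ immediately beats $\delta_n=O\left(\left(\frac{8}{9}\right)^n\right)$, which is precisely Proposition~\ref{Prop:k=1} in the paper. Your worry about ``whether the known constant lies comfortably above $8/9$'' is therefore unfounded, but without invoking the theorem the proof is incomplete at exactly its one deep ingredient. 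The alternative elementary route you sketch should be abandoned: Mahler's result rests on Ridout's $p$-adic extension of Roth's theorem and is ineffective, and no elementary argument is known that rules out $\{(4/3)^n\}$ being exponentially small infinitely often; iterating the mod-$3^{n+1}$ recursion only relocates the difficulty. Once Mahler is cited, the rest of your argument closes the theorem.
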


We write $ \{ x \}=x-\lfloor x \rfloor$ , for the fractional part of $x$. Given coprime integers  $p>q>1$, we consider the set $\left\{ \left\{ \left( \frac{p}{q} \right)^{n} \right\} \mid n \in \N \right\}$. T.~Vijayaraghavan proved in \cite{Vi} that this set has infinitely many limit points, but otherwise not much is known about its distribution.  As a corollary of Theorem~\ref{Theo:Main} we obtain the following surprising result. For real $s$ write $\varepsilon_x(s)=(2x)^s \left( \left( \frac{2}{3} \right)^s + \left( \frac{1}{2} \right)^s \right)^2$ and for simplicity we write $\varepsilon(s)=\varepsilon_1(s)$.

\begin{Cor} \label{Cor:FractionalParts} 
For all natural numbers, except finite exceptions, $n \ge 2$, we have that $$1< \left\{  \frac{1}{\zeta(n)-1} \right\} + \left\{ \left( \frac{4}{3} \right)^n \right\}<1+\varepsilon(n).$$
Therefore, the set $\left\{  \left\{ \frac{1}{\zeta(n)-1}  \right\}  \mid n \in \N \right\}$ has infinitely many limit points.
\end{Cor}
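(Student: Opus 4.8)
The plan is to convert the corollary into a statement about the tail $\zeta(n)-1=\sum_{m\ge 2}m^{-n}$ and then reduce it to two elementary inequalities, only one of which requires any work. First I would set $A=\frac{1}{\zeta(n)-1}$ and $B=\bigl(\frac43\bigr)^n$. By Theorem~\ref{Theo:Main}, for all $n\ge 2$ outside a finite set we have $\lfloor A\rfloor=2^n-\lfloor B\rfloor-2$, hence $\lfloor A\rfloor+\lfloor B\rfloor=2^n-2$ and therefore
$$\{A\}+\{B\}=(A-\lfloor A\rfloor)+(B-\lfloor B\rfloor)=A+B-(2^n-2),$$
so the claimed bound $1<\{A\}+\{B\}<1+\varepsilon(n)$ is equivalent to $2^n-1<A+B<2^n-1+\varepsilon(n)$. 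Now I would factor $2^{-n}$ out of $\zeta(n)-1$: writing $\zeta(n)-1=2^{-n}(1+r_n)$ with $r_n:=\sum_{m\ge 3}(2/m)^n>0$ gives $A=2^n/(1+r_n)$, and with $u:=(2/3)^n$, $v:=(1/2)^n$ we have $B=2^nu$ and $\varepsilon(n)=2^n(u+v)^2$. Dividing the inequality by $2^n$ and using $\frac{1}{1+r_n}=1-\frac{r_n}{1+r_n}$, it becomes
$$(u+v)\bigl(1-(u+v)\bigr)<\frac{r_n}{1+r_n}<u+v.$$
Finally I would record that $r_n=u+v+w_n$ with $w_n:=\sum_{m\ge 5}(2/m)^n>0$, since $(2/3)^n=u$ and $(2/4)^n=v$.

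The right-hand bound (equivalently, $\{A\}+\{B\}<1+\varepsilon(n)$) is automatic for every $n\ge 2$: since $t\mapsto t/(1+t)$ is increasing and $r_n>u+v>0$, we get $\frac{r_n}{1+r_n}>\frac{u+v}{1+(u+v)}\ge(u+v)(1-(u+v))$, the last step being $1-(u+v)^2\le 1$. The left-hand bound $\frac{r_n}{1+r_n}<u+v$ is the only real content; clearing denominators (legitimate as $u+v<1$ for $n\ge 2$) it is equivalent to $w_n<\dfrac{(u+v)^2}{1-(u+v)}$. For large $n$ this is comfortable, because $w_n=O\bigl((2/5)^n\bigr)$ whereas $\frac{(u+v)^2}{1-(u+v)}\ge(u+v)^2\ge u^2=(4/9)^n$ and $\tfrac25<\tfrac49$; explicitly, using $\sum_{m\ge 6}(2/m)^n\le\int_5^\infty(2/x)^n\,dx$ one gets $w_n\le(2/5)^n\frac{n+4}{n-1}<(4/9)^n$ for $n\ge 7$. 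The finitely many remaining cases $n\in\{2,3,4,5,6\}$ I would settle by direct computation with the known values of $\zeta(2),\dots,\zeta(6)$. Combined with the observation above, this proves the displayed double inequality for all but finitely many $n\ge 2$ (the only possible exceptions being those of Theorem~\ref{Theo:Main}).

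For the last assertion, note $\varepsilon(n)=(8/9)^n+2(2/3)^n+(1/2)^n\to 0$, so the double inequality gives $0<\{1/(\zeta(n)-1)\}-\bigl(1-\{(4/3)^n\}\bigr)<\varepsilon(n)\to 0$ for all large $n$. Thus the sequences $\bigl(\{1/(\zeta(n)-1)\}\bigr)_n$ and $\bigl(1-\{(4/3)^n\}\bigr)_n$ differ by a sequence tending to $0$, so their ranges have the same limit points (if the first range had only finitely many limit points, the estimate would force the distance from $1-\{(4/3)^n\}$ to that finite set to tend to $0$, making the set of limit points of the second range finite as well). By Vijayaraghavan's theorem~\cite{Vi} applied to $p/q=4/3$ (here $\gcd(4,3)=1$ and $4>3>1$), the set $\{\{(4/3)^n\}\mid n\in\N\}$ has infinitely many limit points; applying the homeomorphism $x\mapsto 1-x$ gives the same for $\{1-\{(4/3)^n\}\mid n\in\N\}$, and hence for $\{\{1/(\zeta(n)-1)\}\mid n\in\N\}$.

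The only obstacle worth naming is the single estimate $w_n<\frac{(u+v)^2}{1-(u+v)}$: one has to be a little careful about the crossover point and about the small cases $n=2,\dots,6$, but there is no conceptual difficulty — everything there comes down to the numerical fact $\tfrac25<\tfrac49$.
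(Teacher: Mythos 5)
Your proposal is correct and follows essentially the same route as the paper: the floor identity from Theorem~\ref{Theo:Main} (whose finite exceptional set comes from Mahler) reduces the claim to the double inequality $2^n-1<\frac{1}{\zeta(n)-1}+\left(\frac{4}{3}\right)^n<2^n-1+\varepsilon(n)$, which is exactly the content of the paper's Propositions~\ref{Prop:Lower} and~\ref{Prop:Upper}, and your proof of it — truncating at $m=4$, a second-order estimate for $\frac{1}{1+t}$, the integral bound $\sum_{m\ge 6}(2/m)^n\le\int_5^\infty(2/x)^n\,dx$, the resulting inequality $(10/9)^n>\frac{n+4}{n-1}$ for $n\ge 7$, and direct checks for $n=2,\dots,6$ — matches the paper's estimates step for step, merely repackaged more cleanly via the substitution $r_n=u+v+w_n$. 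Your limit-point argument via Vijayaraghavan is also the intended one (and is spelled out in more detail than the paper gives).
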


\begin{Cor} \label{Cor:Egypt}
If $k=2$ in Theorem~\ref{Theo:Main}, then $\zeta(n)$ cannot be of the form $1+ \frac{1}{m}$ where $m$ is a natural number.
\end{Cor}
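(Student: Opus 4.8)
The plan is to argue by contradiction. Suppose that for some $n\ge2$ we have $k=2$ in Theorem~\ref{Theo:Main} and simultaneously $\zeta(n)=1+\frac1m$ for a natural number $m$. Then $\frac1{\zeta(n)-1}=m$ is an integer equal to its own floor, so Theorem~\ref{Theo:Main} with $k=2$ forces $m=2^{n}-\lfloor(4/3)^{n}\rfloor-2$. Hence it suffices to prove the \emph{strict} inequality
\[
\frac1{\zeta(n)-1}>2^{n}-\left\lfloor\left(\tfrac43\right)^{n}\right\rfloor-2 ,
\]
because then $\frac1{\zeta(n)-1}$ cannot equal the integer on the right-hand side, giving the desired contradiction. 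The non-strict version of this inequality is automatic from Theorem~\ref{Theo:Main}, since a real number is at least its own floor; the entire content therefore lies in upgrading $\ge$ to $>$.

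The first step is to rewrite this in series form. Using $\zeta(n)-1=\sum_{j\ge2}j^{-n}=2^{-n}(1+S)$ with $S=\sum_{j\ge3}(2/j)^{n}$, one has $\frac1{\zeta(n)-1}=\frac{2^{n}}{1+S}$, so the inequality is equivalent to $\zeta(n)-1<\frac1m$, that is, to
\[
\frac{S}{1+S}<\frac{\lfloor(4/3)^{n}\rfloor+2}{2^{n}} .
\]
Here the right-hand side equals $(2/3)^{n}+\frac{2-\{(4/3)^{n}\}}{2^{n}}$, which is at least $(2/3)^{n}+2^{-n}$, while $S=(2/3)^{n}+2^{-n}+\sum_{j\ge5}(2/j)^{n}$; and from $\frac{S}{1+S}=S-\frac{S^{2}}{1+S}$ the matter reduces to proving $\sum_{j\ge5}(2/j)^{n}<\frac{S^{2}}{1+S}$. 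This last inequality is elementary: $S^{2}>(2/3)^{2n}=(4/9)^{n}$, the quantity $1+S$ is bounded above by an absolute constant for $n\ge2$ (the terms $(2/j)^{n}$ decrease in $n$), and $\sum_{j\ge5}(2/j)^{n}$ is at most a constant times $(2/5)^{n}$ by an integral comparison; since $(4/9)^{n}/(2/5)^{n}=(10/9)^{n}\to\infty$, it holds for all $n\ge N_{0}$ with an explicit $N_{0}$. For all but finitely many $n$ one may instead bypass this computation entirely via Corollary~\ref{Cor:FractionalParts}: if $\zeta(n)=1+\frac1m$ then $\{\frac1{\zeta(n)-1}\}=0$, so that corollary would force $1<\{(4/3)^{n}\}$, which is absurd.

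The main obstacle is the finitely many small $n$ that remain — those below $N_{0}$ in the direct estimate, or among the exceptional values of Corollary~\ref{Cor:FractionalParts} — for which the tail of $\zeta(n)$ is too large compared with the available slack for the clean asymptotic comparison to close. For each such $n$ having $k=2$, I would verify $\zeta(n)-1<\frac1m$ with $m=2^{n}-\lfloor(4/3)^{n}\rfloor-2$ directly, bounding a tail $\sum_{j\ge J}j^{-n}$ by a geometric series or by $\int_{J-1}^{\infty}x^{-n}\,dx$; for instance $n=3$ gives $m=4$ and
\[
\zeta(3)-1=\sum_{j\ge2}j^{-3}<\tfrac18+\tfrac1{27}+\int_{3}^{\infty}x^{-3}\,dx<\tfrac14=\tfrac1m ,
\]
and the remaining exceptional values are dispatched in the same way. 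Together with the range $n\ge N_{0}$, this completes the proof.
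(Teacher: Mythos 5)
Your proposal is correct, but it takes a genuinely different and considerably heavier route than the paper. The paper's proof is precisely your parenthetical ``bypass'': if $\zeta(n)=1+\frac1m$ then $\left\{\frac{1}{\zeta(n)-1}\right\}=0$, and the two-sided bound $k-1<\left\{\frac{1}{\zeta(n)-1}\right\}+\left\{\left(\frac43\right)^n\right\}<k-1+\varepsilon(n)$ with $k=2$ forces $1<\left\{\left(\frac43\right)^n\right\}$, which is absurd. The point you miss is that this bound is available for \emph{every} $n\ge2$ with the correct value of $k$ (it is Proposition~\ref{Prop:FractionalParts}, of which Corollary~\ref{Cor:FractionalParts} is only the ``$k=2$ for all but finitely many $n$'' specialization), so there are no exceptional $n$ left over and the proof is two lines. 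Your main line instead re-derives, essentially from scratch, the strict inequality $\frac{1}{\zeta(n)-1}>2^n-\left(\frac43\right)^n-1>2^n-\left\lfloor\left(\frac43\right)^n\right\rfloor-2$, whose first half is exactly the paper's Proposition~\ref{Prop:Lower}; your reduction to $\sum_{j\ge5}(2/j)^n<\frac{S^2}{1+S}$ is algebraically sound and the asymptotic comparison via $(10/9)^n\to\infty$ is valid, but it leaves an unspecified threshold $N_0$ (it works out to roughly $n\ge27$ with your constants) and a finite verification of which you only carry out the case $n=3$. What your approach buys is independence from the fractional-part machinery — it shows directly that $\frac{1}{\zeta(n)-1}$ strictly exceeds the candidate integer — at the cost of an explicit numerical check that the paper's argument avoids entirely.
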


\begin{Conj} \label{Conj1}
$k=1$ only when $n=4,5,13,14,17$.
\end{Conj}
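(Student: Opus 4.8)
The plan is to convert Conjecture~\ref{Conj1} into a statement purely about the fractional parts $\{(4/3)^n\}$, and then to separate a finite computation from a Diophantine tail. Writing $2^n(\zeta(n)-1)=1+\delta_n$ with $\delta_n=\sum_{j\ge3}(2/j)^n>0$, one has $\frac{1}{\zeta(n)-1}=\frac{2^n}{1+\delta_n}=2^n-(4/3)^n-1-\eta_n+\frac{2^n\delta_n^2}{1+\delta_n}$, where $\eta_n=\sum_{j\ge5}(4/j)^n>0$. Comparing this with $2^n-\lfloor(4/3)^n\rfloor-k$ and using $(4/3)^n=\lfloor(4/3)^n\rfloor+\{(4/3)^n\}$ shows that, for every $n$ past a small explicit threshold (the finitely many smaller $n$ being handled directly), $k=1$ holds exactly when $\{(4/3)^n\}\le\theta_n$, where $\theta_n:=\frac{2^n\delta_n^2}{1+\delta_n}-\eta_n$; and when $\{(4/3)^n\}>\theta_n$ the quantity $\frac{1}{\zeta(n)-1}$ lies strictly between two consecutive integers, which is Corollary~\ref{Cor:Egypt}. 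Expanding $2^n\delta_n^2=\sum_{j,l\ge3}(8/(jl))^n$, whose leading term is $(8/9)^n$, and noting that $\eta_n\asymp(4/5)^n$ is of strictly smaller order, one checks that $\theta_n>0$ for every $n\ge2$ and that $\theta_n\asymp(8/9)^n$. Thus Conjecture~\ref{Conj1} is equivalent to the statement that $\{(4/3)^n\}>\theta_n$ for every $n\ge2$ outside $\{4,5,13,14,17\}$.

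With this reformulation the argument splits in two. For $n$ up to an explicit bound $N$ one checks the inequality $\{(4/3)^n\}>\theta_n$ directly — both sides are effectively computable to arbitrary precision — and confirms that the only exceptions below $N$ are the five listed values. For $n>N$ the inequality has to hold automatically; since $\{(4/3)^n\}=\frac{4^n\bmod 3^n}{3^n}$ and $\theta_n\,3^n\asymp(8/3)^n$, this is exactly the assertion that the remainder $4^n\bmod 3^n$ is never smaller than roughly $(8/3)^n$ once $n$ is large, i.e.\ a uniform lower bound $4^n\bmod 3^n\gg(8/3)^n$.

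The main obstacle is precisely this tail estimate, which is why the statement is offered as a conjecture rather than a theorem. The trivial bound $\{(4/3)^n\}\ge3^{-n}$ (valid because $\gcd(4,3)=1$) is far too weak, and no known effective lower bound for $|4^n-L\,3^n|$ — whether from linear forms in logarithms or from the available results on how closely $(4/3)^n$ can approximate an integer — is strong enough to beat $\theta_n^{-1}\asymp(9/8)^n$. An ineffective argument of Ridout / subspace-theorem type might well show that $\{\,n:\{(4/3)^n\}\le\theta_n\,\}$ is finite — morally the ``finite exceptions'' clause of Theorem~\ref{Theo:Main} — but it would not supply an explicit $N$, so it cannot by itself pin down the precise set $\{4,5,13,14,17\}$. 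This puts Conjecture~\ref{Conj1} in the same circle as the notoriously open problems on the distribution of $\{(3/2)^n\}$ (Mahler's problem and his $Z$-numbers), and an unconditional proof of the exact list seems beyond current techniques. What one can realistically hope to establish is the conditional result ``if $\{(4/3)^n\}\ge c\,\lambda^n$ for some fixed $\lambda>\frac{8}{9}$ and all large $n$, then Conjecture~\ref{Conj1} holds'', together with the explicit verification for $n$ as far as available methods reach.
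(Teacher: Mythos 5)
This statement is a conjecture, and neither you nor the paper proves it: your reduction of $k=1$ to the inequality $\left\{\left(\frac{4}{3}\right)^n\right\}\le\theta_n$ with $\theta_n\asymp\left(\frac{8}{9}\right)^n$ is correct and is essentially the paper's own route — Proposition~\ref{Prop:FractionalParts} gives $k=1\Rightarrow\left\{\left(\frac{4}{3}\right)^n\right\}<\varepsilon(n)$ with $\varepsilon(n)\asymp\left(\frac{8}{9}\right)^n$, Mahler's theorem then yields only ineffective finiteness of exceptions (Proposition~\ref{Prop:k=1}), and the explicit list $\{4,5,13,14,17\}$ is supported only by computation up to $n\le 5{,}000{,}000$. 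Your assessment that the missing ingredient is an effective lower bound $\left\{\left(\frac{4}{3}\right)^n\right\}\gg\lambda^n$ for some $\lambda>\frac{8}{9}$, currently out of reach, so that only the conditional statement plus finite verification is attainable, agrees with the paper's stance.
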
 
Using Mathematica we checked when $\left\{ \left( \frac{4}{3} \right)^n \right\}$ is less than $10^{-9}$ for $n \le 5,000,000$, and no examples were found. As $\varepsilon(n)<10^{-9}$ for $n \ge 176$, Corollary~\ref{Cor:FractionalParts} implies that the conjecture holds for $176 \leq n \leq 5,000,000$. Since F.~Adams-Watters verified the conjecture for $n \leq 1000$, we conclude the conjecture holds for $n \le 5,000,000$.

\begin{Theo} \label{Theo:Zeta}
For $n$ large enough we have
$$1-\left( \frac{8}{9} \right)^n- \left( \frac{2}{3} \right)^n < \left\{  \frac{1}{\zeta(n)-1} \right\} + \left\{  \frac{\left( \frac{2}{3} \right)^n}{\zeta(n)-1} \right\} < $$
$$1-\left( \frac{8}{9} \right)^n- \left( \frac{2}{3} \right)^n +\varepsilon(n) + \varepsilon_{\frac{2}{3}}(n)$$
except for finite exceptions where either
$$0 < \left\{  \frac{1}{\zeta(n)-1} \right\} + \left\{  \frac{\left( \frac{2}{3} \right)^n}{\zeta(n)-1} \right\} <\varepsilon(n) + \varepsilon_{\frac{2}{3}}(n)-\left( \frac{8}{9} \right)^n- \left( \frac{2}{3} \right)^n$$
or
$$2-\left( \frac{8}{9} \right)^n- \left( \frac{2}{3} \right)^n < \left\{  \frac{1}{\zeta(n)-1} \right\} + \left\{  \frac{\left( \frac{2}{3} \right)^n}{\zeta(n)-1} \right\} < 2$$
holds.
\end{Theo}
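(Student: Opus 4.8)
The plan is to read everything off the same expansion that underlies Theorem~\ref{Theo:Main}. Write $\zeta(n)-1=\sum_{k\ge2}k^{-n}=2^{-n}(1+S)$ with $S=\sum_{k\ge3}(2/k)^n$, and split $S=U+T$ with $U=(2/3)^n+(1/2)^n$ and $T=\sum_{k\ge5}(2/k)^n$. Using $2^n(1-U)=2^n-(4/3)^n-1$ and $\tfrac1{1+S}-(1-U)=\tfrac{US-T}{1+S}$ one gets
$$\frac1{\zeta(n)-1}=\frac{2^n}{1+S}=2^n-\Bigl(\tfrac43\Bigr)^n-1+E_1,\qquad E_1:=\frac{2^n(US-T)}{1+S},$$
and multiplying by $(2/3)^n$,
$$\frac{(2/3)^n}{\zeta(n)-1}=\Bigl(\tfrac43\Bigr)^n-\Bigl(\tfrac89\Bigr)^n-\Bigl(\tfrac23\Bigr)^n+E_2,\qquad E_2:=\Bigl(\tfrac23\Bigr)^nE_1.$$
From $2^n(US-T)=2^nU^2-2^nT(1-U)=\varepsilon(n)-2^nT(1-U)$ one reads off, for $n$ large, $0<E_1<\varepsilon(n)$ (positivity because $\varepsilon(n)\ge(8/9)^n$ decays more slowly than $2^nT\asymp(4/5)^n$), hence $0<E_2<(2/3)^n\varepsilon(n)=\varepsilon_{2/3}(n)$; and using in addition that $2^nT$ contains the terms $(4/5)^n$ and $(2/3)^n$, one gets the sharper bound $E_1+E_2<(8/9)^n+(2/3)^n$ for $n$ large, i.e.\ $E_1<\gamma$, where $\gamma:=(8/9)^n+(2/3)^n-E_2$ lies in $(0,1)$ for $n$ large.

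Next, set $\beta=\{(4/3)^n\}$. The two displays say that $\frac1{\zeta(n)-1}-(E_1-\beta)$ and $\frac{(2/3)^n}{\zeta(n)-1}-(\beta-\gamma)$ are the integers $2^n-1-\lfloor(4/3)^n\rfloor$ and $\lfloor(4/3)^n\rfloor$, and $E_1-\beta,\ \beta-\gamma\in(-1,1)$ for $n$ large, so $\{1/(\zeta(n)-1)\}$ equals $E_1-\beta$ if $\beta\le E_1$ and $1+E_1-\beta$ if $\beta>E_1$, while $\{(2/3)^n/(\zeta(n)-1)\}$ equals $\beta-\gamma$ if $\beta\ge\gamma$ and $1+\beta-\gamma$ if $\beta<\gamma$. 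Adding, the sum equals $1+E_1-\gamma$ when $\beta>E_1,\ \beta\ge\gamma$ or when $\beta\le E_1,\ \beta<\gamma$; equals $2+E_1-\gamma$ when $\beta>E_1,\ \beta<\gamma$; and equals $E_1-\gamma$ when $\beta\le E_1,\ \beta\ge\gamma$ — the last forcing $\gamma\le\beta\le E_1$, which is impossible once $E_1<\gamma$, hence occurring for only finitely many $n$. Writing $1+E_1-\gamma=1-(8/9)^n-(2/3)^n+(E_1+E_2)$ and $2+E_1-\gamma=2-(8/9)^n-(2/3)^n+(E_1+E_2)$, and feeding in $0<E_1+E_2<\varepsilon(n)+\varepsilon_{2/3}(n)$ together with $E_1+E_2<(8/9)^n+(2/3)^n$, these three values become exactly the main double inequality, the near-$2$ inequality, and (on the finite set where it can arise at all) the near-$0$ inequality of the statement.

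Finally one must check that the main inequality fails for only finitely many $n$, i.e.\ that the configurations $\beta\le E_1,\ \beta\ge\gamma$ and $\beta>E_1,\ \beta<\gamma$ are finite. The first was just dealt with. The second forces $\beta<\gamma<(8/9)^n+(2/3)^n$, equivalently $\lfloor(2/3)^n/(\zeta(n)-1)\rfloor=\lfloor(4/3)^n\rfloor-1$: this is the exact analogue, for $(2/3)^n/(\zeta(n)-1)$, of the relation $\lfloor1/(\zeta(n)-1)\rfloor=2^n-\lfloor(4/3)^n\rfloor-1$ that produces the value $k=1$ in Theorem~\ref{Theo:Main}, and it is disposed of by the same argument — namely the lower bound $\{(4/3)^n\}\gg(8/9)^n$ for $n$ large (a bound of the type $\{(4/3)^n\}>c^n$ with $c>8/9$, coming from linear forms in logarithms), which keeps $\beta$ out of the tiny interval $[0,\max(E_1,\gamma))$ for large $n$. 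This non‑closeness‑to‑an‑integer fact is the one genuinely nontrivial ingredient and the main obstacle; it is precisely what Theorem~\ref{Theo:Main} rests on, so we may take it for granted, after which all exceptions lie in a finite set. Everything else is the routine, if somewhat delicate, comparison of the decay rates of $(8/9)^n$, $(4/5)^n$, $(2/3)^n$ and $2^nT$ needed to make the inequalities strict and to secure $E_1<\gamma$ for $n$ large.
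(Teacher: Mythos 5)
Your argument is correct and is in substance the paper's own proof: the same geometric-expansion approximations $1/(\zeta(n)-1)\approx 2^n-(4/3)^n-1$ and $(2/3)^n/(\zeta(n)-1)\approx(4/3)^n-(8/9)^n-(2/3)^n$, the cancellation of $\{(4/3)^n\}$ between the two, and the same appeal to a lower bound on $\{(4/3)^n\}$ to make the two exceptional cases finite --- the paper merely packages these steps as Proposition~\ref{Prop:Gen} (with $x=2/3$) plus Corollary~\ref{Cor:FractionalParts} and adds the two double inequalities, so that your case analysis on $\beta$ versus $E_1$ and $\gamma$ reappears there as the integer parameters $k$ and $k'$. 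One small correction: the non-approximability input is Mahler's (ineffective, Ridout-type) theorem \cite{Ma}, not linear forms in logarithms, but as you note it is exactly the ingredient already used for Theorem~\ref{Theo:Main}.
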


Let $P(s)$ be the Prime Zeta Function, that is, $P(s)=\sum\limits_{m=1}^{\infty} \frac{1}{(p_m)^s}$ where $p_m$ is the $m^{th}$ prime. For real $s$ let $\delta(s)=2^s \left( \left( \frac{2}{3} \right)^s+\left( \frac{2}{5} \right)^s \right)^2 -\left( \frac{4}{5} \right)^s$.
\begin{Theo} \label{Theo:Prime}
For all real $s \ge 7$ we have that $$1-\varepsilon(s)<\frac{1}{P(s)} - \frac{1}{\zeta(s)-1} < 1+\delta(s)$$
\end{Theo}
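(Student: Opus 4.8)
The plan is to estimate both $\frac{1}{P(s)}$ and $\frac{1}{\zeta(s)-1}$ very precisely for large $s$, since both quantities are dominated by their first few terms. First I would write $\zeta(s)-1 = \sum_{n\ge 2} n^{-s} = 2^{-s} + 3^{-s} + 4^{-s} + R_\zeta(s)$ where $R_\zeta(s) = \sum_{n\ge 5} n^{-s}$, and similarly $P(s) = 2^{-s} + 3^{-s} + 5^{-s} + R_P(s)$ with $R_P(s) = \sum_{m\ge 4} p_m^{-s} \le \sum_{n\ge 7} n^{-s}$ (using $p_4 = 7$). The key algebraic maneuver is to compute $\frac{1}{P(s)} - \frac{1}{\zeta(s)-1} = \frac{(\zeta(s)-1) - P(s)}{P(s)(\zeta(s)-1)}$. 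The numerator is $\sum_{n\ge 2, n \text{ composite}} n^{-s} - (\text{contribution mismatch})$; more precisely $(\zeta(s)-1) - P(s) = \sum_{n \ge 2} n^{-s} - \sum_{p \text{ prime}} p^{-s} = \sum_{n \ge 4,\; n \text{ composite}} n^{-s}$, whose leading term is $4^{-s}$. So the difference behaves like $\frac{4^{-s}}{(2^{-s}+\cdots)^2} \approx \frac{4^{-s}}{4^{-s}} = 1$, which explains why the answer is close to $1$.

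Next I would make this rigorous by factoring out $2^{-s}$. Write $\zeta(s) - 1 = 2^{-s}(1 + (2/3)^s + (1/2)^s + 2^s R_\zeta(s))$ and note $2^s R_\zeta(s) = \sum_{n \ge 5}(2/n)^s$. Similarly $P(s) = 2^{-s}(1 + (2/3)^s + (2/5)^s + 2^s R_P(s))$. Then the composite-number sum is $(\zeta(s)-1) - P(s) = \sum_{n\ge 4 \text{ composite}} n^{-s} = 4^{-s}(1 + \sum_{n \ge 6 \text{ composite}}(4/n)^s)$, whose leading correction after $4^{-s}$ is $(4/6)^s = (2/3)^s$ (from $n=6$), then $(4/8)^s = (1/2)^s$ (from $n=8$), etc. Therefore
\begin{equation*}
\frac{1}{P(s)} - \frac{1}{\zeta(s)-1} = \frac{4^{-s}\bigl(1 + O((2/3)^s)\bigr)}{2^{-2s}\bigl(1+(2/3)^s+(2/5)^s+\cdots\bigr)\bigl(1+(2/3)^s+(1/2)^s+\cdots\bigr)} = \frac{1 + O((2/3)^s)}{(1+(2/3)^s+\cdots)^2}.
\end{equation*}
Expanding the denominator via $1/(1+u)^2 = 1 - 2u + O(u^2)$ and comparing with the stated bounds, I would track the $(2/3)^s$ terms carefully: the denominator of the ``$\zeta$-type'' factor contributes a subtraction of order $2(2/3)^s$, giving the lower bound near $1 - \varepsilon(s)$ where $\varepsilon(s) = 2^s((2/3)^s + (1/2)^s)^2$, and the numerator's extra composite terms together with the $(2/5)^s$ in the prime factor yield the upper correction $\delta(s) = 2^s((2/3)^s + (2/5)^s)^2 - (4/5)^s$.

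The main obstacle will be bookkeeping the error terms sharply enough to land inside the asymmetric window $(1 - \varepsilon(s), 1 + \delta(s))$ rather than merely proving the difference tends to $1$. Concretely I expect to need: (i) a clean upper bound on the tails $2^s R_\zeta(s) = \sum_{n\ge 5}(2/n)^s$ and $2^s R_P(s) = \sum_{m \ge 4}(2/p_m)^s$, e.g. via comparison with a geometric series $\sum_{n \ge 5}(2/5)^s(5/n)^s \le (2/5)^s \cdot \frac{1}{1-(5/6)^s}$ for $s$ large, so these are lower-order than the displayed $(1/2)^s$ and $(2/5)^s$ terms respectively; (ii) a matching bound on the ``higher composite'' tail $\sum_{n\ge 6 \text{ composite}}(4/n)^s \le (2/3)^s + (1/2)^s + \sum_{n\ge 9}(4/n)^s$, controlling the remainder geometrically; and (iii) verifying the hypothesis $s \ge 7$ suffices for all the geometric-series convergence estimates and for the sign of the final inequalities, which is where the explicit threshold $7$ (making ratios like $(5/6)^7$, $(4/9)^7$ small) enters. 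Once these three tail estimates are in hand, the result follows by collecting terms and invoking the elementary inequality $1 - 2u \le (1+u)^{-2} \le 1 - 2u + 3u^2$ for small $u > 0$.
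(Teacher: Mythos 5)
Your starting identity $\frac{1}{P(s)}-\frac{1}{\zeta(s)-1}=\frac{(\zeta(s)-1)-P(s)}{P(s)(\zeta(s)-1)}$, with the numerator recognized as $\sum_{n\ge 4,\ n\ \mathrm{composite}}n^{-s}\sim 4^{-s}$ and the denominator $\sim 4^{-s}$, is a genuinely different route from the paper's and is a sound way to see why the difference is near $1$. The paper does something much shorter: it already has the two-sided approximations $2^s-\left(\frac{4}{3}\right)^s<\frac{1}{P(s)}<2^s-\left(\frac{4}{3}\right)^s+\delta(s)$ (Propositions~\ref{Prop:PrimeLower} and~\ref{Prop:PrimeUpper}) and $2^s-\left(\frac{4}{3}\right)^s-1<\frac{1}{\zeta(s)-1}<2^s-\left(\frac{4}{3}\right)^s-1+\varepsilon(s)$ (the proofs of Propositions~\ref{Prop:Lower} and~\ref{Prop:Upper}, valid for real $s\ge 7$), and simply subtracts them. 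That two-line subtraction is the entire proof, and it explains why the constants are exactly $\varepsilon(s)$ and $\delta(s)$: they are the error terms of the two separate reciprocal approximations, not quantities that fall out of your quotient expansion.

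As written, though, your argument has a genuine gap: it is a plan whose decisive steps are all deferred, and the step where you match your expansion to the stated window is wrong at leading order. Writing $\zeta(s)-1=2^{-s}(1+a)$, $P(s)=2^{-s}(1+b)$ and the composite sum as $4^{-s}(1+c)$, the difference equals $\frac{1+c}{(1+a)(1+b)}=1+(c-a-b)+\cdots$, and $c-a-b=-\left(\frac{2}{3}\right)^s-\left(\frac{2}{5}\right)^s+\left(\frac{4}{9}\right)^s-\cdots$, so the true correction to $1$ is of order $\left(\frac{2}{3}\right)^s$ and is \emph{negative}. By contrast $\varepsilon(s)=\left(\frac{8}{9}\right)^s+2\left(\frac{2}{3}\right)^s+\left(\frac{1}{2}\right)^s$ and $\delta(s)=\left(\frac{8}{9}\right)^s+2\left(\frac{8}{15}\right)^s+\left(\frac{8}{25}\right)^s-\left(\frac{4}{5}\right)^s$ are both $\sim\left(\frac{8}{9}\right)^s$, which is far larger than $\left(\frac{2}{3}\right)^s$; your claim that the $2\left(\frac{2}{3}\right)^s$ subtraction ``gives the lower bound near $1-\varepsilon(s)$'' misidentifies where $\varepsilon$ comes from, and your final inequality $1-2u\le(1+u)^{-2}$ treats the two distinct denominators as equal. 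Your method, if completed, would prove the sharper estimate that the difference is $1-O\left(\left(\frac{2}{3}\right)^s\right)$, from which the theorem follows only after additionally checking that $\delta(s)>0$ and that $\varepsilon(s)$ exceeds your explicit correction for all real $s\ge 7$ --- and none of the tail bounds (i)--(iii), nor the verification at the threshold $s=7$, is actually carried out. Either finish those estimates or, more efficiently, assemble the theorem from the four propositions the paper has already proved.
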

As the dominating terms of both $P(s)$ and $\zeta(s)-1$ are the same one might expect their their reciprocals would tend to each other as $s$ tends to infinity. Thus, the somewhat surprising fact is that their difference is bounded away from $0$.
 
\vspace{.5cm}
\noindent \textbf{Acknowledgements.} 
I would like thank James McKee for his alternative proof for Proposition~\ref{Prop:Upper} using a geometric expansion, this improvement turned out to be crucial for this paper. I would also like to thank Jan-Christoph Schlage-Puchta for his helpful comments and discussions on previous drafts of this paper, as well as writing a Mathematica programme. In addition, I thank Vsevolod Lev helpful suggestion on a previous draft and for Yann Bugeaud for pointing out Malher's Theorem which made this paper much stronger. Finally I would like to thank my father, Yiftach Barnea, for helping me understand the previous literature, and with writing and organising this paper.

\section{Analytic Results}

\begin{Prop} \label{Prop:Lower}
For all $n \ge 2$ we have that $$1<\frac{1}{\zeta(n)-1}-2^n+\left( \frac{4}{3} \right)^n+2.$$
\end{Prop}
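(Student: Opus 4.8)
The plan is to get a good upper bound on $\zeta(n)-1$ and translate it into a lower bound on its reciprocal. Write $\zeta(n)-1 = \sum_{m=2}^{\infty} m^{-n} = 2^{-n} + 3^{-n} + \sum_{m\ge 4} m^{-n}$. The key is to control the tail $\sum_{m\ge 4} m^{-n}$ from above by something comparable to $(4/3)^{-n}\cdot 2^{-n}$ — indeed the dominant term of the tail is $4^{-n} = (3/4)^n \cdot (1/2)^n$, which is exactly the shape that will produce the $\lfloor (4/3)^n\rfloor$ term after inversion. So first I would prove a clean bound of the form
\[
\zeta(n)-1 < \frac{1}{2^n} + \frac{1}{3^n} + \frac{c}{4^n}
\]
for a suitable constant $c$ (e.g. by comparing $\sum_{m\ge 4} m^{-n}$ to a geometric series with ratio $(4/5)^n$ or to an integral $\int_3^\infty t^{-n}\,dt$), valid for all $n\ge 2$.

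Next I would invert. Let $A = 2^{-n} + 3^{-n}$, so $\zeta(n)-1 = A + (\text{tail})$ with $0 <\text{tail} < c\cdot 4^{-n}$. Then
\[
\frac{1}{\zeta(n)-1} > \frac{1}{A + c\cdot 4^{-n}} = \frac{1}{A}\cdot\frac{1}{1 + c\cdot 4^{-n}/A}> \frac{1}{A}\left(1 - \frac{c\cdot 4^{-n}}{A}\right).
\]
Now $1/A = 2^n/(1 + (2/3)^n) = 2^n(1 - (2/3)^n + (2/3)^{2n} - \cdots)$, and I would expand this geometric series. The leading terms give $2^n - (4/3)^n + (8/9)^n - \cdots$, and the claim $1/(\zeta(n)-1) - 2^n + (4/3)^n + 2 > 1$ rearranges to $1/(\zeta(n)-1) > 2^n - (4/3)^n - 1$. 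Since $2^n(2/3)^{2n} = (8/9)^n \cdot 2^n \cdot (1/2)^n$... more simply: the next term in the expansion of $2^n/(1+(2/3)^n)$ after $2^n - (4/3)^n$ is $+2^n(2/3)^{2n} = (8/9)^n \cdot (something positive)$ — I need to verify this positive correction, minus the error from the tail $\frac{c\cdot 4^{-n}}{A^2}$, stays safely above $-1$. For $n \ge 2$ all these correction terms are tiny (exponentially small or bounded), so the inequality $> 2^n - (4/3)^n - 1$ should hold with enormous room to spare; for the handful of smallest $n$ one can check numerically.

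The main obstacle I anticipate is purely bookkeeping: making the geometric expansion of $1/(A + \text{tail})$ rigorous with explicit, sign-controlled error terms rather than $O(\cdot)$ notation, since the proposition is an exact inequality that must hold for \emph{all} $n\ge 2$ including small cases. In particular one must be careful that the floor on $(4/3)^n$ in the final statement (Theorem 1.1) is handled correctly: here, conveniently, there is no floor — the proposition is stated with $(4/3)^n$ itself — so the only real work is choosing the constant $c$ in the tail bound small enough that the subtracted error $c\cdot 4^{-n}/A^2 \le 2^n c (3/4)^n \cdot(\text{bounded})$ does not eat into the slack. I would pick $c$ via the crude bound $\sum_{m\ge 4} m^{-n} \le 4^{-n} + \int_4^\infty t^{-n}dt = 4^{-n}(1 + \frac{4}{n-1}) \le 4^{-n}\cdot 5$ for $n\ge 2$, then verify the resulting inequality symbolically for $n$ large and by direct computation for $n = 2, 3, \dots$ up to wherever the asymptotic estimate takes over.
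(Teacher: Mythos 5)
There is a genuine gap, and it is quantitative rather than structural: you assume the inequality holds ``with enormous room to spare,'' but it is in fact asymptotically tight. Rewriting the claim as $\frac{1}{\zeta(n)-1}>2^n-\left(\frac{4}{3}\right)^n-1$, the two sides differ by only about $\left(\frac{8}{9}\right)^n-\left(\frac{4}{5}\right)^n$, which tends to $0$ (compare Proposition~\ref{Prop:Upper}, which caps the gap by $\varepsilon(n)$). Tracking your own scheme: with $A=2^{-n}+3^{-n}$ and $T=\sum_{m\ge 4}m^{-n}$ one has $\frac{1}{A}=2^n-\left(\frac{4}{3}\right)^n+\frac{(8/9)^n}{1+(2/3)^n}$ and $A^2=4^{-n}\left(1+(2/3)^n\right)^2$, so after the inversion $\frac{1}{A+T}>\frac{1}{A}-\frac{T}{A^2}$ your target becomes $\frac{T}{A^2}<1+\frac{(8/9)^n}{1+(2/3)^n}$; writing $T<c\cdot 4^{-n}$ this forces $c<1+2\left(\frac{2}{3}\right)^n+\left(\frac{8}{9}\right)^n+\cdots$, i.e.\ $c-1$ must be \emph{exponentially} small. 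Your proposed $c=1+\frac{4}{n-1}$ violates this for every $n\ge 2$ (e.g.\ at $n=10$ you would need $\frac{T}{A^2}<1.35$ but get only $\frac{T}{A^2}\le 1.44$), and $c=5$ is hopeless since $\frac{c\cdot 4^{-n}}{A^2}\to c$; ``bounded'' is not good enough here. The culprit is bounding $\sum_{m\ge 5}m^{-n}$ by $\int_4^{\infty}t^{-n}\,dt=\frac{4}{n-1}4^{-n}$, which decays only polynomially relative to $4^{-n}$, whereas the available slack is $\left(\frac{8}{9}\right)^n 4^{-n}$. (Your alternative, a geometric majorant of ratio $(4/5)^n$ starting at $4^{-n}$, is actually false: already its third term $(16/25)^n 4^{-n}$ is smaller than $6^{-n}$.)

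The repair is to keep the terms $4^{-n}$ and $5^{-n}$ explicit and estimate only $\sum_{m\ge 6}m^{-n}$ by $\int_5^{\infty}t^{-n}\,dt=\frac{5^{1-n}}{n-1}$; the resulting relative error $\left(\frac{4}{5}\right)^n\frac{n+4}{n-1}$ is then beaten by the $(8/9)^n$ margin once $\left(\frac{10}{9}\right)^n>\frac{n+4}{n-1}$, which holds for $n\ge 7$, with $n=2,\dots,6$ checked directly. This is precisely the precision the paper's own proof achieves: it argues by contradiction from $\zeta(n)-1\ge \frac{3^n}{6^n-4^n-3^n}$, cancels the contributions of $i=2,3,4,5$ one at a time, bounds $\sum_{i\ge 6}$ by $\int_5^{\infty}$, and reduces to exactly the inequality $\left(\frac{10}{9}\right)^n>\frac{n+4}{n-1}$. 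So your overall architecture (bound the tail, invert by a geometric expansion) is salvageable and close in spirit to the paper's treatment of the upper bound, but as written the step ``all these correction terms are tiny\dots so the inequality should hold with enormous room to spare'' is exactly where the proof breaks.
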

\begin{proof}
We prove this by contradiction. Assume $$1\geq \frac{1}{\zeta(n)-1}-2^n+\left( \frac{4}{3} \right)^n+2.$$
We will first cancel out some terms from our inequality.
Our assumption implies that  $$2^n-\left( \frac{4}{3} \right)^n-1 \geq \frac{1}{\zeta(n)-1},$$
$$\frac{6^n-4^n-3^n}{3^n} \geq \frac{1}{\zeta(n)-1},$$
$$\frac{3^n}{6^n-4^n-3^n} \leq \zeta(n)-1,$$
$$3^n \leq \left(6^n-4^n-3^n \right) \sum_{i=2}^{\infty} \frac{1}{i^n},$$
$$3^n+ \sum_{i=2}^{\infty} \frac{3^n}{i^n} \leq \sum_{i=2}^{\infty} \frac{6^n-4^n}{i^n},$$
$$3^n+ \sum_{i=2}^{\infty} \frac{3^n}{i^n} \leq 3^n-2^n +\sum_{i=3}^{\infty} \frac{6^n-4^n}{i^n},$$
$$2^n+ \sum_{i=2}^{\infty} \frac{3^n}{i^n} \leq \sum_{i=3}^{\infty} \frac{6^n-4^n}{i^n},$$
$$2^n- \frac{6^n-4^n}{3^n}+ \sum_{i=2}^{\infty} \frac{3^n}{i^n} \leq \sum_{i=4}^{\infty} \frac{6^n-4^n}{i^n},$$
$$\frac{4^n}{3^n} + \sum_{i=2}^{\infty} \frac{3^n}{i^n} \leq \sum_{i=4}^{\infty} \frac{6^n-4^n}{i^n},$$
$$\frac{3^n}{2^n} + \frac{4^n}{3^n} + \sum_{i=3}^{\infty} \frac{3^n}{i^n} \leq \frac{6^n-4^n}{4^n} + \sum_{i=5}^{\infty} \frac{6^n-4^n}{i^n},$$
$$2+\frac{4^n}{3^n} +\frac{3^n}{4^n}+ \sum_{i=5}^{\infty} \frac{3^n}{i^n}=1 + \frac{4^n}{3^n} + \sum_{i=3}^{\infty} \frac{3^n}{i^n} \leq \sum_{i=5}^{\infty} \frac{6^n-4^n}{i^n},$$
\begin{equation}
\label{eq:2}
2+\frac{16^n+9^n}{12^n}-\frac{6^n-4^n-3^n}{5^n} \leq \sum_{i=6}^{\infty} \frac{6^n-4^n-3^n}{i^n}.
\end{equation}
It is clear that $$\sum_{i=6}^{\infty} \frac{6^n-4^n-3^n}{i^n} < \int_{5}^{\infty} \frac{6^n-4^n-3^n}{t^n} dt$$
Substituting this into \eqref{eq:2} we get $$2+\frac{16^n+9^n}{12^n}-\frac{6^n-4^n-3^n}{5^n}<\int_{5}^{\infty} \frac{6^n-4^n-3^n}{t^n} dt=\left(6^n-4^n-3^n \right) \left( \frac{5^{1-n}}{n-1} \right),$$
$$2+\frac{16^n+9^n}{12^n}<\left(6^n-4^n-3^n \right) \left( \frac{1}{5^n} +\frac{1}{5^{n-1}(n-1)} \right)=\left(6^n-4^n-3^n \right) \left( \frac{n+4}{5^n(n-1)} \right),$$
$$\left( \frac{5^n}{6^n-4^n-3^n} \right) \left( \frac{16^n+9^n}{12^n} +2 \right) < \frac{n+4}{n-1}.$$
Let $$f(x)=\left( \frac{5^x}{6^x-4^x-3^x} \right) \left( \frac{16^x+9^x}{12^x} +2 \right)- \frac{x+4}{x-1}.$$
We deduce from our assumption that for $n \ge 2$ we have that $f(n)<0$.

For $n=2,3,4,5,6$ it can be checked that the above statement is false and therefore the assumption does not hold for these values. We will now prove that $f(x)>0$ for $x \geq 7$.
It is clear that $$f(x) > \left( \frac{5^x}{6^x} \right) \left( \frac{16^x}{12^x} \right) - \frac{x+4}{x-1}= \left( \frac{10^x}{9^x} \right) -\frac{x+4}{x-1}>0$$
as $\left( \frac{10^7}{9^7} \right) -\frac{7+4}{7-1}>0$ and $\left( \frac{10^x}{9^x} \right) -\frac{x+4}{x-1}$ has a positive derivative for all $x$. 
This is a contradiction and thus, $$1< \frac{1}{\zeta(n)-1}-2^n+\left( \frac{4}{3} \right)^n+2$$ for all $n \ge 2$.
\end{proof}

\begin{Prop} \label{Prop:Upper}
For $n \ge 2$ we have that $$\frac{1}{\zeta(n)-1}-2^n+\left( \frac{4}{3} \right)^n+2 < 1+ \varepsilon(n).$$
\end{Prop}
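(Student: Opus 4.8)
The plan is to mirror the structure of the proof of Proposition~\ref{Prop:Lower}, but now working with the reverse inequality and using a \emph{geometric} expansion (as suggested by McKee) rather than an integral bound. First I would write $\zeta(n)-1 = \sum_{i=2}^\infty i^{-n}$ and rearrange the desired inequality $\frac{1}{\zeta(n)-1} < 2^n - (4/3)^n - 2 + 1 + \varepsilon(n)$ into the equivalent form where $\zeta(n)-1$ is isolated as a lower bound: $\zeta(n)-1 > \bigl(2^n - (4/3)^n - 1 + \varepsilon(n)\bigr)^{-1}$, i.e.
$$1 < \left(2^n - \left(\tfrac{4}{3}\right)^n - 1 + \varepsilon(n)\right)\sum_{i=2}^\infty \frac{1}{i^n}.$$
Multiplying through by $3^n$ and substituting $\varepsilon(n) = 2^n\bigl((2/3)^n + (1/2)^n\bigr)^2 = (4^n + 3^n)^2/(6^n)$ (expanding: $\varepsilon(n)\cdot 3^n$ becomes a clean rational expression in powers of small integers), the leading coefficient $2^n - (4/3)^n - 1 + \varepsilon(n)$ times $3^n$ becomes $6^n - 4^n - 3^n + 3^n\varepsilon(n)$. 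So the target reduces to showing
$$3^n < \bigl(6^n - 4^n - 3^n + 3^n\varepsilon(n)\bigr)\sum_{i=2}^\infty \frac{1}{i^n},$$
which is genuinely of the same shape as the inequality peeled apart in Proposition~\ref{Prop:Lower}, just with the extra positive perturbation $3^n\varepsilon(n)$ and the inequality direction flipped.

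Next I would peel off the first several terms of $\sum_{i=2}^\infty i^{-n}$ exactly — $i = 2, 3, 4, 5$, say — exactly as in the displayed chain in the previous proof, reducing to an inequality comparing a finite explicit rational expression in $2^n, 3^n, 4^n, 5^n, 6^n$ (and the $\varepsilon$ contribution) against the tail $\sum_{i \ge 6}(6^n - 4^n - 3^n)/i^n$ (with the extra $\varepsilon$-piece multiplying the whole tail). Since we now need a \emph{lower} bound on the tail (to beat the left side), I would replace $\sum_{i=6}^\infty i^{-n}$ from below, e.g. by $\int_6^\infty t^{-n}\,dt = \frac{6^{1-n}}{n-1}$, or more efficiently by just the single term $6^{-n}$ plus $\int_7^\infty t^{-n}\,dt$; the geometric expansion McKee suggested would instead bound $\sum_{i\ge 6} i^{-n}$ below by a convenient geometric series $\sum_{j\ge 0} 6^{-n} r^{j}$ with ratio $r$ close to $(6/7)^n$, giving $\frac{6^{-n}}{1 - (6/7)^n}$ or similar — this is cleaner because it keeps everything in powers of integers and avoids the $\frac{1}{n-1}$ factor. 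After this substitution the claim becomes a single inequality $g(n) > 0$ for an explicit elementary function $g$ built from exponentials $(a/b)^n$ and the $\varepsilon$ term.

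Finally I would split into the small-$n$ range and the large-$n$ range. For $n = 2, 3, 4, 5, 6$ (and perhaps a few more) I would verify $g(n) > 0$ by direct computation. For $n \ge 7$ (or whatever cutoff the small cases force), I would bound $g(n)$ below by dropping the negative lower-order exponential terms and keeping the dominant one — exactly the trick used at the end of Proposition~\ref{Prop:Lower}, where $f(x)$ was bounded below by $(10/9)^x - \frac{x+4}{x-1}$ — reducing to showing some $(a/b)^n$ with $a > b$ eventually dominates a term like $\frac{n+c}{n-1}$ or a constant, which follows from a base case plus a monotonicity (positive-derivative) argument. The main obstacle is purely bookkeeping: making sure the $\varepsilon(n)$ perturbation is handled on the correct side of every inequality so that it genuinely \emph{helps} (it should, since it enlarges the right-hand side), and choosing the peel-off depth and the tail lower bound tightly enough that the resulting elementary inequality is actually true for all $n$ down to the first value not checked by hand — there is a real risk that a crude tail bound makes the small-$n$ cases fail, forcing either a deeper peel or a sharper (geometric rather than integral) tail estimate, which is presumably exactly why McKee's geometric expansion was "crucial."
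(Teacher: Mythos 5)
Your strategy would work, but it is a genuinely different and considerably more laborious route than the paper's. The paper's proof is three lines: since the coefficient is positive, truncate $\zeta(n)-1>\frac{1}{2^n}+\frac{1}{3^n}+\frac{1}{4^n}$, invert to get $\frac{1}{\zeta(n)-1}<\frac{2^n}{1+u}$ with $u=\left(\frac{2}{3}\right)^n+\left(\frac{1}{2}\right)^n$, and apply the alternating expansion $\frac{1}{1+u}<1-u+u^2$; the quadratic term $2^nu^2$ is exactly $\varepsilon(n)$, which is why $\varepsilon$ has the form it does. You have misidentified the role of McKee's ``geometric expansion'': it is the expansion of $\frac{1}{1+u}$, not a geometric lower bound for the tail $\sum_{i\ge 6}i^{-n}$. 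Relatedly, your worry that ``a crude tail bound makes the small-$n$ cases fail'' is unfounded: you need a \emph{lower} bound on $\zeta(n)-1$ and the multiplying coefficient $2^n-\left(\frac{4}{3}\right)^n-1+\varepsilon(n)$ is positive, so every retained term only helps and the tail past $i=4$ can be discarded entirely. Indeed, your reduced inequality with only the terms $i=2,3,4$ reads $\left(1-u+u^2\right)\left(1+u\right)=1+u^3>1$, which holds for all $n\ge 2$ with no case analysis, no integral estimate, and no check of small $n$. Your peel-to-$i=5$-plus-tail-estimate version would also go through (I checked the margin is of order $\left(\frac{3}{2}\right)^n$ in the relevant normalization, so the dominant-term argument at the end succeeds), but it buys nothing over the direct truncation and obscures where $\varepsilon(n)$ comes from.
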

\begin{proof}
Consider:$$\frac{1}{\zeta(n)-1}=\frac{1}{\frac{1}{2^n}+\frac{1}{3^n}+\frac{1}{4^n}+\frac{1}{5^n}+\cdots} < \frac{1}{\frac{1}{2^n}+\frac{1}{3^n}+\frac{1}{4^n}}$$

$$\frac{1}{\zeta(n)-1} < \frac{2^n}{1+\frac{2^n}{3^n} +\frac{1}{2^n}}=$$
$$2^n \left(1-\left( \frac{2^n}{3^n}+ \frac{1}{2^n} \right) +\left( \frac{2^n}{3^n}+ \frac{1}{2^n} \right)^2 - \left( \frac{2^n}{3^n}+ \frac{1}{2^n} \right)^3 + \cdots \right)<$$
$$2^n \left(1-\left( \frac{2^n}{3^n}+ \frac{1}{2^n} \right) +\left( \frac{2^n}{3^n}+ \frac{1}{2^n} \right)^2 \right)=2^n -\left( \frac{4}{3} \right)^n -1+2^n \left( \frac{2^n}{3^n}+ \frac{1}{2^n} \right)^2.$$
We conclude that $$\frac{1}{\zeta(n)-1}-2^n+\left( \frac{4}{3} \right)^n+2 < 1+ \varepsilon(n),$$ for $n \ge 2$.
\end{proof}

\begin{Prop} \label{Prop:PrimeUpper}
For $s > 1$ we have$$\frac{1}{P(s)}-2^s+\left( \frac{4}{3} \right)^s<\delta(s).$$
\end{Prop}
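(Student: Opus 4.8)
The plan is to mimic the proof of Proposition~\ref{Prop:Upper}, truncating the denominator of $1/P(s)$ after the first three primes. Since $p_1=2$, $p_2=3$, $p_3=5$, we have
$$\frac{1}{P(s)}=\frac{1}{\frac{1}{2^s}+\frac{1}{3^s}+\frac{1}{5^s}+\frac{1}{7^s}+\cdots}<\frac{1}{\frac{1}{2^s}+\frac{1}{3^s}+\frac{1}{5^s}}=\frac{2^s}{1+\left(\frac{2}{3}\right)^s+\left(\frac{2}{5}\right)^s}.$$

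Writing $r=\left(\frac{2}{3}\right)^s+\left(\frac{2}{5}\right)^s$, I would next bound $\frac{1}{1+r}$ from above by a truncated geometric expansion. Here lies the one subtlety to watch: unlike in Proposition~\ref{Prop:Upper}, for $s$ close to $1$ one has $r>1$ (indeed $r\to\frac{16}{15}$ as $s\to 1^+$), so the series $1-r+r^2-\cdots$ diverges and cannot be invoked termwise. Instead I would use the exact identity $\frac{1}{1+r}=1-r+r^2-\frac{r^3}{1+r}$, which for every $r>0$ yields $\frac{1}{1+r}<1-r+r^2$ with no convergence hypothesis at all.

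Combining the two steps gives $\frac{1}{P(s)}<2^s\left(1-r+r^2\right)$. Since $2^s r=\left(\frac{4}{3}\right)^s+\left(\frac{4}{5}\right)^s$ and $2^s r^2=2^s\left(\left(\frac{2}{3}\right)^s+\left(\frac{2}{5}\right)^s\right)^2$, this reads
$$\frac{1}{P(s)}<2^s-\left(\frac{4}{3}\right)^s-\left(\frac{4}{5}\right)^s+2^s\left(\left(\frac{2}{3}\right)^s+\left(\frac{2}{5}\right)^s\right)^2,$$
and rearranging the right-hand side as $2^s-\left(\frac{4}{3}\right)^s+\delta(s)$ gives exactly $\frac{1}{P(s)}-2^s+\left(\frac{4}{3}\right)^s<\delta(s)$. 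The main (and only) obstacle is the $r>1$ phenomenon near $s=1$, which forces the use of the finite identity rather than the full geometric series; the rest is routine algebra.
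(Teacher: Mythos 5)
Your proof is correct and is exactly the argument the paper intends: its own proof of Proposition~\ref{Prop:PrimeUpper} consists of the single line ``using the same argument as Proposition~\ref{Prop:Upper}, the proof is clear,'' i.e.\ truncate $P(s)$ after $\frac{1}{2^s}+\frac{1}{3^s}+\frac{1}{5^s}$ and expand $\frac{2^s}{1+r}$ with $r=\left(\frac{2}{3}\right)^s+\left(\frac{2}{5}\right)^s$. Your observation that $r>1$ for $s$ near $1$ (so the infinite alternating series diverges and cannot be used verbatim as in Proposition~\ref{Prop:Upper}) is a genuine subtlety that the paper's one-line proof glosses over, and your replacement of the series by the finite identity $\frac{1}{1+r}=1-r+r^2-\frac{r^3}{1+r}$ cleanly repairs it for all $s>1$.
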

\begin{proof}
Using the same argument as Proposition~\ref{Prop:Upper}, the proof is clear.
\end{proof}

\begin{Prop} \label{Prop:PrimeLower}
For $s \ge 4$, we have $$0<\frac{1}{P(s)}-2^s+\left( \frac{4}{3} \right)^s.$$
\end{Prop}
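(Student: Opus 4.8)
The plan is to mimic the proof of Proposition~\ref{Prop:Lower} as closely as possible, replacing the sum over all integers $\ge 2$ by the sum over primes. We want $2^s - (4/3)^s < 1/P(s)$, i.e. $P(s) < 1/(2^s - (4/3)^s)$, i.e. $(2^s - (4/3)^s)P(s) < 1$. Writing $P(s) = 2^{-s} + 3^{-s} + 5^{-s} + 7^{-s} + \cdots$, I would first pull out the leading term: since $2^s - (4/3)^s = (6^s - 4^s)/3^s$, we want
$$\frac{6^s - 4^s}{3^s}\left(\frac{1}{2^s} + \frac{1}{3^s} + \frac{1}{5^s} + \cdots\right) < 1.$$
The $2^{-s}$ term contributes $(6^s-4^s)/6^s = 1 - (2/3)^s$, so the inequality reduces to
$$\frac{6^s - 4^s}{3^s}\left(\frac{1}{3^s} + \frac{1}{5^s} + \frac{1}{7^s} + \cdots\right) < \left(\frac{2}{3}\right)^s,$$
i.e., after multiplying by $3^s$,
$$(6^s - 4^s)\left(\frac{1}{3^s} + \sum_{p \ge 5} \frac{1}{p^s}\right) < 2^s.$$
The $3^{-s}$ term on the left is $(6^s-4^s)/3^s = 2^s - (4/3)^s$, so cancelling $2^s$ we need
$$(6^s - 4^s)\sum_{p \ge 5} \frac{1}{p^s} < \left(\frac{4}{3}\right)^s.$$

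Now I would bound the prime tail $\sum_{p \ge 5} p^{-s}$ crudely. The cheapest approach is to compare with a tail of an integer sum starting at $5$, but that is too lossy; instead I would use that the primes $\ge 5$ are all $\ge 5$ and lie in the residue classes $1, 5 \pmod 6$, so $\sum_{p \ge 5} p^{-s} \le \sum_{j \ge 1}\left((6j-1)^{-s} + (6j+1)^{-s}\right) < 2\sum_{j\ge 1}(6j-1)^{-s}$, which I would bound by $2 \cdot 5^{-s} + 2\int_1^\infty (6t-1)^{-s}\,dt = 2\cdot 5^{-s} + \frac{2 \cdot 5^{1-s}}{6(s-1)}$. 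Plugging this in, it suffices to show
$$(6^s - 4^s)\left(\frac{2}{5^s} + \frac{2 \cdot 5^{1-s}}{6(s-1)}\right) < \left(\frac{4}{3}\right)^s,$$
which, dividing through, becomes an inequality of the shape $g(s) := \left(\frac{5}{6}\right)^s \cdot \frac{3^s}{... }$ — concretely, after multiplying both sides by $3^s 5^s$ and dividing by $4^s$,
$$\left(\left(\frac{15}{4}\right)^s - \left(\frac{15}{6}\right)^s\right)\left(2 + \frac{10}{6(s-1)}\right) < \left(\frac{20}{3}\right)^s,$$
and since the left side is dominated by $2(15/4)^s$ times a bounded factor while the right grows like $(20/3)^s$ with $20/3 > 15/4$, one checks the base case at $s=4$ by hand and notes the ratio of the two sides is eventually monotone (take a logarithmic derivative, as in Proposition~\ref{Prop:Lower}).

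The main obstacle I anticipate is making the prime-tail estimate tight enough that the resulting elementary inequality already holds at $s = 4$ rather than only for larger $s$; the $2/5^s$ leading term of my crude bound is somewhat wasteful. If the base case fails at $s=4$, the fix is to peel off the $5^{-s}$ and $7^{-s}$ terms of $P(s)$ explicitly (as was done with several terms in Proposition~\ref{Prop:Lower}) and only then bound the remaining tail $\sum_{p \ge 11} p^{-s}$ by an integral; with two or three primes extracted the base case at $s=4$ should clear comfortably, and for $s \ge 5$ the exponential gap between $20/3$ and $15/4$ makes everything routine. Throughout, I would keep $s$ real (not just integer), since the statement is for all real $s \ge 4$, so all the monotonicity arguments must be phrased via derivatives rather than discrete differences.
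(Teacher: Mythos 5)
Your plan is essentially the paper's own proof: both cancel the $p=2$ and $p=3$ terms to reduce the claim to $\sum_{p\ge 5}p^{-s} < \frac{2^s}{9^s-6^s}$, and both then bound the prime tail by exploiting sparsity of primes in residue classes plus an integral comparison (the paper peels off $5^{-s}$ and uses that primes $\ge 7$ are odd, hence contribute less than $\frac{1}{2}\sum_{n\ge 6}n^{-s} < \frac{1}{2}\int_5^\infty t^{-s}\,dt$, where you use the classes $\pm 1 \bmod 6$). Your first-pass bound does indeed fail at $s=4$ (it gives about $4.25 \not< (4/3)^4 \approx 3.16$), but the contingency you flag — peeling off $5^{-s}$ and $7^{-s}$ before integrating over $\sum_{p \ge 11}p^{-s}$ — yields roughly $2.33 < 3.16$ at $s=4$ with each term of the ratio decreasing in $s$, so the argument goes through.
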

\begin{proof}
We will use proof by contradiction. Assume 
$$0 \ge \frac{1}{P(s)}-2^n+\left( \frac{4}{3} \right)^s .$$ 
We will first cancel some terms of our inequality. 
Our assumption implies that
$$\frac{6^s-4^s}{3^s} \ge \frac{1}{P(s)},$$
$$\frac{3^s}{6^s-4^s}  \le P(s)=\sum\limits_{m=1}^{\infty} \frac{1}{(p_m)^s},$$
$$3^s \le \sum\limits_{m=1}^{\infty} \frac{6^s-4^s}{(p_m)^s} =3^s-2^s +\sum\limits_{m=2}^{\infty} \frac{6^s-4^s}{(p_m)^s} ,$$
$$2^s \le \sum\limits_{m=2}^{\infty} \frac{6^s-4^s}{(p_m)^s},$$
$$1 \le \sum\limits_{m=2}^{\infty} \frac{3^s-2^s}{(p_m)^s}=1- \frac{2^s}{3^s} + \sum\limits_{m=3}^{\infty} \frac{3^s-2^s}{(p_m)^s} ,$$
$$ \frac{2^s}{9^s-6^s} \le \sum\limits_{m=3}^{\infty} \frac{1}{(p_m)^s},$$ 
$$\frac{2^s}{9^s-6^s}-\frac{1}{5^s} \le \frac{1}{7^s} +\frac{1}{11^s} + \frac{1}{13^s} + \cdots < \frac{1}{2} \left( \frac{1}{6^s} + \frac{1}{7^s} +\frac{1}{8^s} + \cdots \right).$$
It is clear that $$\frac{1}{6^s} + \frac{1}{7^s} +\frac{1}{8^s} + \cdots < \int_{5}^{\infty} \frac{1}{t^s} dt.$$
Therefore, $$\frac{2^s}{9^s-6^s}-\frac{1}{5^s}<\frac{1}{2} \int_{5}^{\infty} \frac{1}{t^s} dt=\frac{1}{2} \left( \frac{5^{1-s}}{s-1} \right).$$
Hence,
$$0>\frac{2^s}{9^s-6^s}-\frac{1}{5^s}-\frac{1}{2} \left( \frac{1}{(s-1)(5^{s-1})} \right)=$$
$$\frac{2^s}{9^s-6^s}-\frac{1}{5^s}-\left( \frac{5}{2s-2} \right) \left( \frac{1}{5^s} \right).$$
So
$$0>\frac{10^s}{9^s-6^s}-1-\frac{5}{2s-2}$$ and
$$\frac{2s+3}{2s-2}>\frac{10^s}{9^s-6^s} .$$
However, it is easy to see this fails for all $s \ge 4$.  Thus $$0<\frac{1}{P(s)}-2^n+\left( \frac{4}{3} \right)^s.$$
\end{proof}

\section{Proofs of the Results}

The following lemma is obvious.
\begin{Lem} \label{Lem:Floor}
If $$A<x<B,$$ then $$A-1<\lfloor{x}\rfloor<B.$$
\end{Lem}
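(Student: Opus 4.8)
The plan is to unwind the two defining inequalities of the floor function, namely $\lfloor x \rfloor \le x$ and $x < \lfloor x \rfloor + 1$, and chain them with the given bounds $A < x < B$. There is essentially nothing to discover here, so the "proof" is just the bookkeeping of which inequality to compose with which.

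First I would establish the upper bound. Since $\lfloor x \rfloor \le x$ by definition of the floor and $x < B$ by hypothesis, transitivity gives $\lfloor x \rfloor \le x < B$, hence $\lfloor x \rfloor < B$. Next I would establish the lower bound. The inequality $x < \lfloor x \rfloor + 1$ rearranges to $x - 1 < \lfloor x \rfloor$, and the hypothesis $A < x$ gives $A - 1 < x - 1$; composing these yields $A - 1 < x - 1 < \lfloor x \rfloor$, hence $A - 1 < \lfloor x \rfloor$. Combining the two halves gives $A - 1 < \lfloor x \rfloor < B$, which is the claim.

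The only point requiring a moment's attention — and it is not really an obstacle — is the placement of the $-1$: on the right we may keep the strict bound $B$ directly, since $\lfloor x \rfloor \le x < B$, whereas on the left the non-strict inequality $\lfloor x \rfloor \le x$ is useless by itself and one must instead use $\lfloor x \rfloor > x - 1$, which is exactly why the statement is phrased with $A-1$ rather than $A$ on that side. I expect the author's proof to be one line, and mine would be too.
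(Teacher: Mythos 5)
Your proof is correct and is exactly the standard argument the paper has in mind when it declares the lemma obvious (the paper gives no written proof): the upper bound follows from $\lfloor x\rfloor \le x < B$ and the lower bound from $\lfloor x\rfloor > x-1 > A-1$. Nothing further is needed.
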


\begin{Prop} \label{Prop:Floor}
For all natural numbers $n \ge 2$, we have that $$\left\lfloor \frac{1}{\zeta(n)-1} \right\rfloor=2^n-\left\lfloor \left( \frac{4}{3} \right)^n \right\rfloor-k,$$    where $k=1$ or $k=2$.
\end{Prop}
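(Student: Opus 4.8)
The plan is to extract everything from the two analytic estimates already proved: together, Propositions~\ref{Prop:Lower} and~\ref{Prop:Upper} say precisely that for every $n \ge 2$,
$$2^n - \left(\frac{4}{3}\right)^n - 1 < \frac{1}{\zeta(n)-1} < 2^n - \left(\frac{4}{3}\right)^n - 1 + \varepsilon(n).$$
I would then write $\left(\frac{4}{3}\right)^n = \left\lfloor\left(\frac{4}{3}\right)^n\right\rfloor + \left\{\left(\frac{4}{3}\right)^n\right\}$ and set $M = 2^n - \left\lfloor\left(\frac{4}{3}\right)^n\right\rfloor - 1$, which is an integer since $2^n \in \Z$, so the displayed inequality becomes
$$M - \left\{\left(\frac{4}{3}\right)^n\right\} < \frac{1}{\zeta(n)-1} < M - \left\{\left(\frac{4}{3}\right)^n\right\} + \varepsilon(n).$$

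Next I would feed this into Lemma~\ref{Lem:Floor} with $A = M - \left\{\left(\frac{4}{3}\right)^n\right\}$ and $B = M - \left\{\left(\frac{4}{3}\right)^n\right\} + \varepsilon(n)$. Since $0 \le \left\{\left(\frac{4}{3}\right)^n\right\} < 1$ we get $A - 1 > M - 2$, hence $\left\lfloor \frac{1}{\zeta(n)-1} \right\rfloor \ge M - 1$ unconditionally; and $B \le M + \varepsilon(n)$, so as soon as $\varepsilon(n) \le 1$ we also get $\left\lfloor \frac{1}{\zeta(n)-1} \right\rfloor \le M$. Expanding $\varepsilon(n) = \left(\frac{8}{9}\right)^n + 2\left(\frac{2}{3}\right)^n + \left(\frac{1}{2}\right)^n$ exhibits it as a sum of three geometrically decreasing terms, so it is decreasing in $n$, and a single evaluation gives $\varepsilon(5) < 1$; hence $\varepsilon(n) < 1$ for all $n \ge 5$. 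Therefore for every $n \ge 5$ we have $\left\lfloor \frac{1}{\zeta(n)-1} \right\rfloor \in \{M-1, M\}$, which is exactly the asserted identity with $k = 1$ (the value $M$) or $k = 2$ (the value $M-1$).

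Finally I would dispose of the three remaining values $n = 2, 3, 4$ by direct computation: using $\zeta(2) = \pi^2/6$ and $\zeta(4) = \pi^4/90$ together with a sufficiently accurate numerical value of $\zeta(3)$, one checks that $\left\lfloor \frac{1}{\zeta(n)-1} \right\rfloor$ equals $1$, $4$, $12$ respectively, matching $2^n - \left\lfloor\left(\frac{4}{3}\right)^n\right\rfloor - k$ with $k = 2, 2, 1$.

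I do not expect a genuine obstacle: all the analytic content sits inside Propositions~\ref{Prop:Lower} and~\ref{Prop:Upper}, and what remains is the bookkeeping with floors and fractional parts above. The one point requiring a little care is that $\varepsilon(n)$ fails to be below $1$ for small $n$ (indeed $\varepsilon(2), \varepsilon(3), \varepsilon(4)$ all exceed $1$), so the step bounding $\left\lfloor \frac{1}{\zeta(n)-1} \right\rfloor$ from above cannot be run uniformly in $n$; that is precisely why the cases $n \le 4$ are peeled off and checked numerically. One could shrink the exceptional range to $n \in \{2,3\}$ by observing that one really only needs $\varepsilon(n) \le 1 + \left\{\left(\frac{4}{3}\right)^n\right\}$ and that $\left\{\left(\frac{4}{3}\right)^4\right\} = \frac{13}{81} > \varepsilon(4) - 1$, but there is no advantage in doing so.
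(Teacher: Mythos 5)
Your proof is correct and takes essentially the same route as the paper: both combine Propositions~\ref{Prop:Lower} and~\ref{Prop:Upper} with Lemma~\ref{Lem:Floor} to trap the integer $\left\lfloor \frac{1}{\zeta(n)-1} \right\rfloor - 2^n + \left\lfloor \left( \frac{4}{3} \right)^n \right\rfloor + 2$ in an interval of length less than $2$ once $\varepsilon(n)<1$ (i.e.\ $n \ge 5$), and then dispose of $n=2,3,4$ by direct computation. The only difference is presentational: you absorb $\left\lfloor \left( \frac{4}{3} \right)^n \right\rfloor$ into the integer $M$ and take a single floor, whereas the paper applies Lemma~\ref{Lem:Floor} twice.
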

\begin{proof}
From Proposition~\ref{Prop:Lower} and Proposition~\ref{Prop:Upper} we know that $$1<\frac{1}{\zeta(n)-1}-2^n+\left( \frac{4}{3} \right)^n+2<1+\varepsilon(n).$$
Applying Lemma~\ref{Lem:Floor}  twice to this we obtain that $$-1<\left\lfloor\frac{1}{\zeta(n)-1}\right\rfloor - 2^n +\left\lfloor\left( \frac{4}{3} \right)^n\right\rfloor+2<1+\varepsilon(n).$$
 As $\lim\limits_{n\to\infty} \varepsilon(n)=0$, for $n \ge 5$  we have that $$-1<\left\lfloor\frac{1}{\zeta(n)-1}\right\rfloor - 2^n +\left\lfloor\left( \frac{4}{3} \right)^n\right\rfloor+2<2.$$
It is clear that $\left\lfloor\frac{1}{\zeta(n)-1}\right\rfloor - 2^n +\left\lfloor\left( \frac{4}{3} \right)^n\right\rfloor+2$ is an integer for $n \ge 2$.  Thus, $$\left\lfloor\frac{1}{\zeta(n)-1}\right\rfloor - 2^n +\left\lfloor\left( \frac{4}{3} \right)^n\right\rfloor+2=0 \textrm{ or } 1.$$  It is easy to check that this holds also for $2 \le n \le 4$.
Hence, $$\left\lfloor\frac{1}{\zeta(n)-1}\right\rfloor=2^n -\left\lfloor\left( \frac{4}{3} \right)^n\right\rfloor-k,$$ where $k=1$ or $k=2$ for all $n \ge 2$.
\end{proof}

\begin{Prop}\label{Prop:FractionalParts}
For all natural numbers, where $k$ is as in Proposition~\ref{Prop:Floor}, $n \ge 2$, we have that $$k-1< \left\{  \frac{1}{\zeta(n)-1} \right\} + \left\{ \left( \frac{4}{3} \right)^n \right\}<k-1+\varepsilon(n).$$
\end{Prop}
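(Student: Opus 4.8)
The plan is to derive everything from Proposition~\ref{Prop:Floor} together with the tight two-sided estimate coming from Propositions~\ref{Prop:Lower} and~\ref{Prop:Upper}. First I would write $x=\frac{1}{\zeta(n)-1}$ and $y=\left(\frac{4}{3}\right)^n$, so that $\{x\}=x-\lfloor x\rfloor$ and $\{y\}=y-\lfloor y\rfloor$. Adding these, $\{x\}+\{y\}=(x+y)-\lfloor x\rfloor-\lfloor y\rfloor$. By Proposition~\ref{Prop:Floor} we have $\lfloor x\rfloor=2^n-\lfloor y\rfloor-k$, hence $\lfloor x\rfloor+\lfloor y\rfloor=2^n-k$, and therefore $\{x\}+\{y\}=x+y-2^n+k$.

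Next I would substitute the analytic bounds. Propositions~\ref{Prop:Lower} and~\ref{Prop:Upper} together state exactly $1<x-2^n+y+2<1+\varepsilon(n)$, i.e. $1<x+y-2^n+2<1+\varepsilon(n)$. Subtracting $2$ and then adding $k$ to all parts gives $k-1<x+y-2^n+k<k-1+\varepsilon(n)$. Combining this with the identity $\{x\}+\{y\}=x+y-2^n+k$ from the previous step yields precisely
$$k-1<\left\{\frac{1}{\zeta(n)-1}\right\}+\left\{\left(\frac{4}{3}\right)^n\right\}<k-1+\varepsilon(n),$$
which is the claimed inequality, valid for all $n\ge 2$ since all three ingredients (Propositions~\ref{Prop:Lower},~\ref{Prop:Upper},~\ref{Prop:Floor}) hold there.

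There is essentially no obstacle here: the proposition is a direct bookkeeping consequence of the three preceding propositions, and the only thing to be careful about is that the $k$ appearing in the fractional-part identity is the \emph{same} $k$ as in Proposition~\ref{Prop:Floor} — which is why the statement is phrased with "$k$ is as in Proposition~\ref{Prop:Floor}." The mild subtlety worth a sentence in the write-up is checking that the quantities $\lfloor x\rfloor$ and $\lfloor y\rfloor$ really are integers so that the floor-sum identity is exact, but this is immediate. So I would keep the proof to three or four lines: introduce the identity $\{x\}+\{y\}=x+y-2^n+k$ via Proposition~\ref{Prop:Floor}, quote the sandwich from Propositions~\ref{Prop:Lower} and~\ref{Prop:Upper}, and shift by $k-2$.
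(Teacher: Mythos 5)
Your proof is correct and is essentially identical to the paper's: the identity $\{x\}+\{y\}=x+y-2^n+k$ is exactly the paper's equation combining Proposition~\ref{Prop:Floor} with the fact that a number equals its integer part plus its fractional part, and the rest is the same substitution of the sandwich from Propositions~\ref{Prop:Lower} and~\ref{Prop:Upper}. No gaps.
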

\begin{proof}
From Proposition~\ref{Prop:Floor} we have that $$\left\lfloor \frac{1}{\zeta(n)-1} \right\rfloor=2^n-\left\lfloor \left( \frac{4}{3} \right)^n \right\rfloor-k,$$ and therefore,
$$\left\lfloor \frac{1}{\zeta(n)-1} \right\rfloor-2^n+\left\lfloor \left( \frac{4}{3} \right)^n \right\rfloor+2=2-k.$$
Since the integral part plus the fractional part is the number, we have that
\begin{equation}
\label{eq:3}
\frac{1}{\zeta(n)-1}-2^n+ \left( \frac{4}{3} \right)^n +2=\left\{ \frac{1}{\zeta(n)-1} \right\} +\left\{ \left( \frac{4}{3} \right)^n \right\} +2-k.
\end{equation}
We know from Proposition~\ref{Prop:Lower} and Proposition~\ref{Prop:Upper} that $$1< \frac{1}{\zeta(n)-1} -2^n+\left( \frac{4}{3} \right)^n +2<1+\varepsilon(n)$$
Substituting \eqref{eq:3} into this we obtain that $$1<\left\{ \frac{1}{\zeta(n)-1} \right\} +\left\{ \left( \frac{4}{3} \right)^n \right\} +2-k<1+\varepsilon(n)$$
We conclude that $$k-1<\left\{ \frac{1}{\zeta(n)-1} \right\} +\left\{ \left( \frac{4}{3} \right)^n \right\}<k-1+\varepsilon(n).$$
\end{proof}

\begin{Prop}\label{Prop:k=1}
Let $k$ be as in Proposition~\ref{Prop:Floor}. Then $k=1$ finitely many times.
\end{Prop}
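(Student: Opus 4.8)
The plan is to feed the case $k=1$ back into Proposition~\ref{Prop:FractionalParts} and then invoke Mahler's theorem on the fractional parts of powers of a non-integer rational. First I record what $k=1$ forces: if $k=1$ for some $n\ge 2$, then Proposition~\ref{Prop:FractionalParts} gives
$$0<\left\{\frac{1}{\zeta(n)-1}\right\}+\left\{\left(\frac{4}{3}\right)^{n}\right\}<\varepsilon(n),$$
and since both summands are non-negative this already forces $\left\{\left(\frac{4}{3}\right)^{n}\right\}<\varepsilon(n)$. Expanding the definition gives $\varepsilon(n)=\left(\frac{8}{9}\right)^{n}+2\left(\frac{2}{3}\right)^{n}+\left(\frac{1}{2}\right)^{n}$, so $\varepsilon(n)^{1/n}\to\frac{8}{9}$. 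Fixing a real $c$ with $\frac{8}{9}<c<1$, say $c=\frac{9}{10}$, we get $\varepsilon(n)<c^{n}$ for all large $n$, and hence $\left\{\left(\frac{4}{3}\right)^{n}\right\}<c^{n}$ for all but finitely many $n$ at which $k=1$.

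Next I would apply Mahler's theorem: since $\frac{4}{3}$ is a rational number which is not an integer and whose numerator and denominator are coprime, for every real $\delta$ with $0<\delta<1$ the inequality $\left\{\left(\frac{4}{3}\right)^{n}\right\}<\delta^{n}$ has only finitely many solutions $n$ (equivalently, $\left\{\left(\frac{4}{3}\right)^{n}\right\}^{1/n}\to 1$; and once $\left\{\left(\frac{4}{3}\right)^{n}\right\}<\frac{1}{2}$, which is forced above, the fractional part coincides with the distance to the nearest integer, so the usual statement of Mahler's theorem applies directly). Taking $\delta=c$, the set of $n$ with $\left\{\left(\frac{4}{3}\right)^{n}\right\}<c^{n}$ is finite, so combined with the previous paragraph only finitely many $n$ can have $k=1$, which is the claim.

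The one real input is Mahler's theorem. The comparison with $\varepsilon(n)$ demands a lower bound of essentially the shape $\left\{\left(\frac{4}{3}\right)^{n}\right\}\gg\left(\frac{8}{9}\right)^{n}$, which is far stronger than the trivial estimate $\left\{\left(\frac{4}{3}\right)^{n}\right\}=(4^{n}\bmod 3^{n})/3^{n}\ge 3^{-n}$ available from congruences; closing that gap is exactly the content of Mahler's result, and it seems out of reach of the elementary manipulations used elsewhere in the paper. Everything else — expanding $\varepsilon(n)$, discarding the finitely many small $n$, and absorbing the lower-order terms $2(2/3)^{n}+(1/2)^{n}$ into the slack between $\left(\frac{8}{9}\right)^{n}$ and $c^{n}$ — is routine.
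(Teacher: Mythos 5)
Your argument is correct and is essentially identical to the paper's: both deduce from the $k=1$ case of Proposition~\ref{Prop:FractionalParts} that $\left\{\left(\frac{4}{3}\right)^{n}\right\}<\varepsilon(n)$, note $\varepsilon(n)=\mathcal{O}\left(\left(\frac{8}{9}\right)^{n}\right)$, and invoke Mahler's theorem with the same comparison exponent (the paper takes $\varepsilon=\log\frac{9}{10}$, i.e.\ your $c=\frac{9}{10}$). Your closing remark correctly identifies Mahler's theorem as the one non-elementary input.
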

\begin{proof}
We know from Proposition~\ref{Prop:FractionalParts} that
$$k-1< \left\{  \frac{1}{\zeta(n)-1} \right\} + \left\{ \left( \frac{4}{3} \right)^n \right\}<k-1+\varepsilon(n).$$
It follows from Mahler's work, see \cite{Ma}, that if $p>q \ge 2$ are coprime integers, and $\varepsilon<0$, then
$$\left\{ \left(\frac{p}{q} \right)^n \right\}>e^{\varepsilon n} $$ for all integers $n$ except for at most a finite number of excpetions.

As $\varepsilon(n)=\mathcal{O}\left( \left(\frac{8}{9} \right)^n \right)$, by taking $p=4$, $q=3$ and $\varepsilon=\log(\frac{9}{10})$ we obtain that only a finite number of $n$ satisfy
$$0< \left\{ \left( \frac{4}{3} \right)^n \right\}<\varepsilon(n).$$
Therefore, only a finite number of $n$ satisfy
$$0< \left\{  \frac{1}{\zeta(n)-1} \right\} + \left\{ \left( \frac{4}{3} \right)^n \right\}<\varepsilon(n).$$
Thus, $k=1$ occurs a finite number of times.
\end{proof}

\begin{proof}[Proof of Theorem~\ref{Theo:Main}.]
Theorem~\ref{Theo:Main} follows from combining both Proposition~\ref{Prop:Floor} and Proposition~\ref{Prop:k=1}. 
\end{proof}

\begin{proof}[Proof of Corollary~\ref{Cor:FractionalParts}.]
Corollary~\ref{Cor:FractionalParts} follows from combining both Proposition~\ref{Prop:FractionalParts} and Proposition~\ref{Prop:k=1}.
\end{proof}

\begin{proof}[Proof of Corollary~\ref{Cor:Egypt}.]
We prove this by contradiction. Suppose $\zeta(n)=1 + \frac{1}{m}$ for some integer $m$, then
$\frac{1}{\zeta(n)-1} =m$ is an integer. Thus, $\left\{ \frac{1}{\zeta(n)-1} \right\}=0$. From Corollary~\ref{Cor:FractionalParts} we know that $$k-1<\left\{ \frac{1}{\zeta(n)-1} \right\} +\left\{ \left( \frac{4}{3} \right)^n \right\}<k-1+\varepsilon(n).$$
Therefore, $$k-1<\left\{ \left( \frac{4}{3} \right)^n \right\}<k-1+\varepsilon(n).$$
When $k=2$ we have $$1<\left\{ \left( \frac{4}{3} \right)^n \right\}<1+\varepsilon(n).$$
However, this implies that a fractional part is greater than $1$ which by definition is impossible. This is a contradiction therefore, if  $\zeta(n)=1 + \frac{1}{m}$ for some integer $m$, then $k=1$ which happens finitely many times.
\end{proof}
 
\begin{Prop} \label{Prop:Gen}
For all $\frac{1}{2}<x< \frac{3}{4}$ and $n$ large enough, $$-\left( \frac{4x}{3} \right)^n -x^n-k< \left \{ \frac{x^n}{\zeta(n)-1} \right \} -\left \{ \left(2x \right)^n \right \} < \varepsilon_x(n)-\left( \frac{4x}{3} \right)^n -x^n-k$$ where $k=0$ or $k=-1$ and when $x$ is rational
$k=0$ except for finite number of exceptions.
\end{Prop}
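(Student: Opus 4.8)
The plan is to reduce the whole statement to the two-sided estimate on $\frac{1}{\zeta(n)-1}$ that we already have. First I would recall from Proposition~\ref{Prop:Lower} and Proposition~\ref{Prop:Upper} that, after rearranging, $2^n-\left(\frac{4}{3}\right)^n-1<\frac{1}{\zeta(n)-1}<2^n-\left(\frac{4}{3}\right)^n-1+\varepsilon(n)$ for every $n\ge2$. Multiplying this chain of inequalities by $x^n>0$ and using the identities $(2x)^n=2^nx^n$, $\left(\frac{4x}{3}\right)^n=\left(\frac{4}{3}\right)^nx^n$ and $\varepsilon_x(n)=x^n\varepsilon(n)$, which are immediate from the definitions, I obtain
$$(2x)^n-\left(\tfrac{4x}{3}\right)^n-x^n<\frac{x^n}{\zeta(n)-1}<(2x)^n-\left(\tfrac{4x}{3}\right)^n-x^n+\varepsilon_x(n).$$

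Next I would split each side into its integer and fractional parts. Put $j=\left\lfloor\frac{x^n}{\zeta(n)-1}\right\rfloor-\left\lfloor(2x)^n\right\rfloor\in\Z$, so that $\frac{x^n}{\zeta(n)-1}-(2x)^n=j+\left(\left\{\frac{x^n}{\zeta(n)-1}\right\}-\{(2x)^n\}\right)$; substituting into the displayed inequality gives
$$-\left(\tfrac{4x}{3}\right)^n-x^n-j<\left\{\tfrac{x^n}{\zeta(n)-1}\right\}-\{(2x)^n\}<\varepsilon_x(n)-\left(\tfrac{4x}{3}\right)^n-x^n-j,$$
which is exactly the claimed inequality with $k=j$. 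It remains to show $j\in\{0,-1\}$ for $n$ large. Since $\frac{1}{2}<x<\frac{3}{4}$ forces $\frac{4x}{3}<1$, $x<1$, and $\varepsilon_x(n)=\mathcal{O}\!\left(\left(\frac{8x}{9}\right)^n\right)$ with $\frac{8x}{9}<\frac{2}{3}<1$, the three quantities $\left(\frac{4x}{3}\right)^n$, $x^n$ and $\varepsilon_x(n)$ all tend to $0$, so for $n$ large the interval in the last display is contained in $(-j-1,-j)$. As the difference of two fractional parts always lies in $(-1,1)$, the only integers $j$ for which this interval can meet $(-1,1)$ are $0$ and $-1$.

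For the last assertion, suppose $x=p/q$ is rational in lowest terms with $\frac{1}{2}<p/q<\frac{3}{4}$, and suppose $k=-1$ for some such $n$. Then the left-hand inequality becomes $1-\left(\frac{4x}{3}\right)^n-x^n<\left\{\frac{x^n}{\zeta(n)-1}\right\}-\{(2x)^n\}$, and since $\left\{\frac{x^n}{\zeta(n)-1}\right\}<1$ this forces $\{(2x)^n\}<\left(\frac{4x}{3}\right)^n+x^n$. A short check of denominators shows that, throughout $\frac{1}{2}<x<\frac{3}{4}$, the number $2x$ is not an integer and, written in lowest terms, equals $a/b$ with $a>b\ge2$ coprime. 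Choosing $c$ with $\frac{4x}{3}<c<1$ and applying Mahler's theorem (the version used in the proof of Proposition~\ref{Prop:k=1}) with $\varepsilon=\log c$, we get $\{(2x)^n\}>c^n$ for all but finitely many $n$, while $c^n>2\left(\frac{4x}{3}\right)^n>\left(\frac{4x}{3}\right)^n+x^n$ once $n$ exceeds $\log2/\log\!\left(3c/(4x)\right)$. This contradicts the previous inequality, so $k=-1$ holds only finitely often.

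I expect the analytic steps to be painless: the asymptotics of $\varepsilon_x(n)$ and the threshold beyond which $c^n$ dominates $\left(\frac{4x}{3}\right)^n+x^n$ are routine. The two places that need genuine care are the reduction of $2x$ to lowest terms so that Mahler's theorem is applicable (in particular ruling out that $2x$ is an integer for $\frac{1}{2}<x<\frac{3}{4}$) and pinning down that the integer $j$ can only take the values $0$ and $-1$; together these form the crux of the argument, and everything else is bookkeeping built on Propositions~\ref{Prop:Lower} and \ref{Prop:Upper}.
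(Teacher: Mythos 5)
Your proof is correct and follows essentially the same route as the paper: multiply the two-sided bound from Propositions~\ref{Prop:Lower} and~\ref{Prop:Upper} by $x^n$, pin down the integer $j=\left\lfloor \frac{x^n}{\zeta(n)-1}\right\rfloor-\left\lfloor (2x)^n\right\rfloor\in\{0,-1\}$ for large $n$, and invoke Mahler for the rational case. Your write-up is in fact slightly more careful than the paper's at the final step, since you verify explicitly that $2x$ is a non-integer rational in lowest terms $a/b$ with $a>b\ge 2$ (so Mahler's theorem applies), where the paper only gestures at the argument of Proposition~\ref{Prop:k=1}.
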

\begin{proof}
From Proposition~\ref{Prop:Lower} and Proposition~\ref{Prop:Upper} we know that 
$$1< \frac{1}{\zeta(n)-1} -2^n+\left( \frac{4}{3} \right)^n +2< 1+ \varepsilon(n).$$
So,
$$0< \frac{1}{\zeta(n)-1} -2^n+\left( \frac{4}{3} \right)^n +1< \varepsilon(n).$$
Recall that $\varepsilon_x(n)=(2x)^n \left( \left( \frac{2}{3} \right)^n + \left( \frac{1}{2} \right)^n \right)^2$. It is obvious that $x^n \varepsilon_y(n)= \varepsilon_{xy}(n)$. Therefore, 
$$0< \frac{x^n}{\zeta(n)-1} -\left(2x \right)^n +\left( \frac{4x}{3} \right)^n + x^n < \varepsilon_x(n)$$ and
\begin{equation}
\label{eq:4}
-\left( \frac{4x}{3} \right)^n-x^n< \frac{x^n}{\zeta(n)-1}-\left(2x \right)^n<\varepsilon_x(n)-\left( \frac{4x}{3} \right)^n-x^n.
\end{equation}
Using Lemma~\ref{Lem:Floor} twice and noticing that $\left\lfloor \left(2x \right)^n \right\rfloor$ appears with a negative sign we obtain that 
$$-1-\left( \frac{4x}{3} \right)^n-x^n<\left\lfloor \frac{x^n}{\zeta(n)-1} \right\rfloor-\left\lfloor \left(2x \right)^n \right\rfloor<1+\varepsilon_x(n)-\left( \frac{4x}{3} \right)^n-x^n.$$ 
Notice that for $\frac{1}{2}<x<\frac{3}{4}$ both bounds of the above inequality tend to $-1$ and $1$ respectively. Also notice that for $n$ large enough we have that 
$$\varepsilon_x(n)-\left( \frac{4x}{3} \right)^n-x^n=x^n \left (\varepsilon(n)-\left( \frac{4}{3} \right)^n-1 \right)<0.$$
As $\left\lfloor \frac{x^n}{\zeta(n)-1} \right\rfloor-\left\lfloor \left(2x \right)^n \right\rfloor$ is an integer for $n$ large enough we have that 
$$\left\lfloor \frac{x^n}{\zeta(n)-1} \right\rfloor-\left\lfloor \left(2x \right)^n \right\rfloor =-1 \textrm{ or } 0.$$
Therefore, 
$$\frac{x^n}{\zeta(n)-1} -\left(2x \right)^n=k+\left\{ \frac{x^n}{\zeta(n)-1} \right\}-\left\{ \left(2x \right)^n \right\},$$ where $k=-1$ or $k=0$.
Substituting this into \eqref{eq:4} we get that
$$-\left( \frac{4x}{3} \right)^n-x^n< k+\left\{ \frac{x^n}{\zeta(n)-1} \right\}-\left\{ \left(2x \right)^n \right\}<\varepsilon_x(n)-\left( \frac{4x}{3} \right)^n-x^n.$$
Hence, $$-\left( \frac{4x}{3} \right)^n-x^n-k< \left\{ \frac{x^n}{\zeta(n)-1} \right\}-\left\{ \left(2x \right)^n \right\}<\varepsilon_x(n)-\left( \frac{4x}{3} \right)^n-x^n-k.$$
One can use the same argument as in Proposition~\ref{Prop:k=1} to show that for a rational number $\frac{1}{2}<x<\frac{3}{4}$ we have that  $k=-1$ happens finitely many times.
\end{proof}

One can apply  the same methods for other values of $x$. For $0 < x \le \frac{1}{2}$, both fractional parts converge to $0$, so it is not interesting. For $x= \frac{3}{4}$, one can achieve the same result but with $k=-2$ or $k=-1$. For $\frac{3}{4}<x$, if we want the bounds to converge, there are two possibilities. The first is that there will be more than two fractional parts (apart for some exceptions like $x=1$), and $k$ will take more values, that is, the number of fractional parts. The second is that there will still be two fractional parts, and $k$ can only take two values, but one of the fractional parts will be of a finite sum of real numbers to natural powers rather than just one term. For example when $x= \frac{11}{10}$ we have that
$$-k< \left\{ \frac{ \left( \frac{11}{10} \right)^n}{\zeta(n)-1} \right\}-\left\{ \left( \frac{11}{5} \right)^n \right\}+\left\{ \left( \frac{22}{15} \right)^n \right\}+\left\{ \left( \frac{11}{10} \right)^n \right\}<-k+\varepsilon_{\frac{11}{10}}(n),$$ where $k=-2$ or $k=-1$ or $k=0$ or $k=1$. 
Or
$$-k<\left\{ \frac{ \left( \frac{11}{10} \right)^n}{\zeta(n)-1} \right\} - \left\{ \left( \frac{11}{5} \right)^n - \left( \frac{22}{15} \right)^n - \left( \frac{11}{10} \right)^n \right\} <-k+\varepsilon_{\frac{11}{10}}(n),$$ where $k=0$ or $k=1$.

\begin{proof}[Proof of Theorem ~\ref{Theo:Zeta}]
When $x= \frac{2}{3}$ in Proposition~\ref{Prop:Gen} we have
$$-\left( \frac{8}{9} \right)^n-\left( \frac{2}{3} \right)^n-k< \left\{ \frac{ \left( \frac{2}{3} \right)^n}{\zeta(n)-1} \right\}-\left\{ \left( \frac{4}{3} \right)^n \right\}<$$
$$\varepsilon_{\frac{2}{3}}(n)-\left( \frac{8}{9} \right)^n-\left( \frac{2}{3} \right)^n-k,$$
where $k=-1$ or $k=0$.
We know from Corollary~\ref{Cor:FractionalParts} that
$$k'-1< \left\{  \frac{1}{\zeta(n)-1} \right\} + \left\{ \left( \frac{4}{3} \right)^n \right\}<k'-1+\varepsilon(n),$$
where $k'=1$ or $k'=2$.  Adding the two inequalities we get
$$m-\left( \frac{8}{9} \right)^n-\left( \frac{2}{3} \right)^n<\left\{  \frac{1}{\zeta(n)-1} \right\} +\left\{ \frac{ \left( \frac{2}{3} \right)^n}{\zeta(n)-1} \right\}<$$
$$m+\varepsilon(n)+\varepsilon_{\frac{2}{3}}(n)-\left( \frac{8}{9} \right)^n-\left( \frac{2}{3} \right)^n,$$
where $m=0$ or $m=1$ or $m=2$. We can see that $m=0$ or $m=2$ occur finitely many times. It is obvious that $$0<\left\{  \frac{1}{\zeta(n)-1} \right\} +\left\{ \frac{ \left( \frac{2}{3} \right)^n}{\zeta(n)-1} \right\}<2.$$ Therefore, when $m=0$ the lower bound can be improved to $0$ and when $m=2$ the upper bound can be improved to $2$. This proves the theorem.
\end{proof}

\begin{proof}[Proof of Theorem ~\ref{Theo:Prime}]
We know from Proposition~\ref{Prop:PrimeUpper} and Proposition~\ref{Prop:PrimeLower} that $$0<\frac{1}{P(s)}-2^s+\left( \frac{4}{3} \right)^s< \delta(s),$$ for all $s \ge 4$.
We can also see from the proof of Proposition~\ref{Prop:Lower} that $$1<\frac{1}{\zeta(s)-1}-2^s+\left( \frac{4}{3} \right)^s+2,$$ for all $s \ge 7$.
In addition, from the proof of Proposition~\ref{Prop:Upper} it follows that $$\frac{1}{\zeta(s)-1}-2^s+\left( \frac{4}{3} \right)^s+2<1+\varepsilon(s),$$ for all $s>1$.
So for all $s \ge 7$ we have $$1<\frac{1}{\zeta(s)-1}-2^s+\left( \frac{4}{3} \right)^s+2<1+\varepsilon(s).$$
Taking the difference between the first inequality and the last inequality, and adding $2$ we obtain that $$1-\varepsilon(s)<\frac{1}{P(s)} - \frac{1}{\zeta(s)-1} < 1+\delta(s),$$ for all $s \ge 7$.
\end{proof}
Using similar arguments as previously more results can be derived about the fractional part of rational powers but this time related to the Prime Zeta function. This could also be done with other functions of infinite series of reciprocal powers, not just the Riemann Zeta function and the Prime Zeta function.

\end{document}